\newtheorem{theorem}{Theorem}[section]
\newtheorem{hypothesis}[theorem]{Hypothesis}
\newtheorem{definition}[theorem]{Definition}
\newtheorem{lemma}[theorem]{Lemma}
\newtheorem{fact}[theorem]{Fact}
\newtheorem{remark}[theorem]{Remark}
\newtheorem{corollary}[theorem]{Corollary}
\newcommand{\fct}[2]{{}^{#1}#2}
\newcommand{\K}{\mathcal{K}}
\newcommand{\Ksatp}[1]{\K^{#1\text{-sat}}}
\newcommand{\rest}{\upharpoonright}
\newcommand{\Union}{\bigcup}
\DeclareMathOperator{\tp}{ga-tp}
\DeclareMathOperator{\id}{id}
\DeclareMathOperator{\Aut}{Aut}
\DeclareMathOperator{\cf}{cf}
\newcommand{\seq}[1]{\langle #1 \rangle}
\newcommand{\gaS}{\operatorname{ga-S}}
\newcommand{\gS}{\gaS}
\newcommand{\gtp}{\tp}
\DeclareMathOperator{\LS}{LS}
\newcommand{\C}{\mathfrak{C}}
\newcommand{\sea}{\mathfrak{C}}
\newcommand{\hanf}[1]{h (#1)}
\newcommand{\ba}{\bar{a}}
\newcommand{\bb}{\bar{b}}
\newcommand{\bc}{\bar{c}}
\newbox\noforkbox \newdimen\forklinewidth
\noforkbox\hbox{\lower 2pt\box1\lower
2pt\box0\relax}
\def\unionstick{\mathop{\copy\noforkbox}\limits}
\newbox\doesforkbox
\doesforkbox\hbox{\lower 0pt\box1 \lower
2pt\box2\lower2pt\box0\relax}
\newcommand{\nf}{\unionstick}
\newcommand{\Sbs}{\gS^\text{bs}}
\newcommand{\s}{\mathfrak{s}}
\def\lta{<}
\def\lea{\le}
\title[Categorical AECs with amalgamation]{On the structure of categorical abstract elementary classes with amalgamation}
\date{\today\\
AMS 2010 Subject Classification: Primary 03C48. Secondary: 03C45, 03C52, 03C55, 03C75, 03E55.}
\keywords{Abstract elementary classes; Categoricity; Superstability; Tameness; Symmetry; Splitting; Good frame; Limit models; Saturated models}
\author{Monica M. VanDieren}
\address{Department of Mathematics\\
Robert Morris University\\
Moon Township PA 15108}
\email{vandieren@rmu.edu}
\author{Sebastien Vasey}
\address{Department of Mathematical Sciences, Carnegie Mellon University, Pittsburgh, Pennsylvania, USA}
\email{sebv@cmu.edu}
\urladdr{http://math.cmu.edu/\textasciitilde svasey/}
\thanks{The second author is supported by the Swiss National Science Foundation.}
\begin{document}

\begin{abstract}

For $\K$ an abstract elementary class with amalgamation and no maximal models, we show that categoricity in a high-enough cardinal implies structural properties such as the uniqueness of limit models and the existence of good frames. This improves several classical results of Shelah.

\begin{theorem}\label{abstract-thm}
  Let $\mu \ge \LS (\K)$. If $\K$ is categorical in a $\lambda \ge \beth_{\left(2^{\mu}\right)^+}$, then:

  \begin{enumerate}
    \item\label{abstract-1} Whenever $M_0, M_1, M_2 \in \K_\mu$ are such that $M_1$ and $M_2$ are limit over $M_0$, we have $M_1 \cong_{M_0} M_2$.
    \item\label{abstract-2} If $\mu > \LS (\K)$, the model of size $\lambda$ is $\mu$-saturated.
    \item\label{abstract-3} If $\mu \ge \beth_{(2^{\LS (\K)})^+}$ and $\lambda \ge \beth_{\left(2^{\mu^+}\right)^+}$, then there exists a type-full good $\mu$-frame with underlying class the saturated models in $\K_\mu$.
  \end{enumerate}
\end{theorem}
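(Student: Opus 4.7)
The strategy is to pass from categoricity to successively stronger structural properties. From the hypotheses that $\K$ has amalgamation, no maximal models, and is categorical in $\lambda \geq \beth_{(2^\mu)^+}$, Shelah's classical analysis (together with its refinements by Grossberg--VanDieren) yields stability in $\mu$ and the \emph{no long splitting chains} property: any sufficiently long increasing chain of models of size $\mu$ is ``stable'' in the sense that every Galois type over its union eventually does not $\mu$-split over a tail model. These are proved via Ehrenfeucht--Mostowski constructions showing that persistent splitting would force too many non-isomorphic models of size $\lambda$, contradicting categoricity.

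For part (1), the key additional ingredient is \emph{symmetry} for $\mu$-non-splitting. One shows that failure of symmetry inside a $\mu$-limit model would propagate, via no long splitting chains and amalgamation, to two pairwise non-isomorphic structures that both embed into the unique $\lambda$-model; comparing them there either forces an isomorphism or contradicts $\lambda$-categoricity. With symmetry of non-splitting in hand, VanDieren's theorem ``symmetry plus no long splitting chains implies uniqueness of limit models'' delivers $M_1 \cong_{M_0} M_2$ for $\mu$-limit models of arbitrary cofinality.

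Part (2) then follows by a limit-model argument. Under $\mu > \LS(\K)$ the unique $\lambda$-model contains $\mu$-limit submodels cofinally: write it as a continuous increasing union $\langle N_i : i < \cf(\lambda) \rangle$ with each $N_{i+1}$ a $\mu$-limit over $N_i$. Part (1) implies each $N_i$ is Galois-saturated in $\mu$, so every Galois type over a $\mu$-sized subset is realized in some $N_i$, yielding $\mu$-saturation of the whole.

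For part (3), the stronger hypothesis $\mu \geq \beth_{(2^{\LS(\K)})^+}$ yields, via the Shelah/Boney tameness theorem, that $\K$ is $\LS(\K)$-tame, while $\lambda \geq \beth_{(2^{\mu^+})^+}$ lets parts (1)--(2) be reapplied at $\mu^+$, producing saturated models in $\K_{\mu^+}$ and a coherent extension theory. I would then define a frame $\s$ with underlying class the Galois-saturated models in $\K_\mu$, basic types all non-algebraic Galois types, and non-forking given by non-$\mu$-splitting from a saturated submodel. Invariance, monotonicity, density, and existence of basic types are formal; local character and continuity come from no long splitting chains; uniqueness and extension come from stability plus tameness; symmetry descends from part (1); and type-fullness follows from the fact that over saturated models every non-algebraic type has a unique non-splitting extension. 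The main obstacle throughout is the extraction of symmetry for non-splitting from categoricity---this is the bridge between Shelah's splitting analysis and VanDieren's symmetry/uniqueness-of-limit-models framework, and it drives the entire argument.
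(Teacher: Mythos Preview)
Your outline has the right architecture---superstability from Shelah--Villaveces, then symmetry, then VanDieren's uniqueness theorem---but each of the three parts has a real gap.

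\textbf{Part (1): the symmetry step.} Your justification (``failure of symmetry would propagate to two non-isomorphic structures embedding in the $\lambda$-model'') is not an argument; many non-isomorphic structures embed in the categorical model without contradiction. The paper's key lemma is different and is really the heart of the whole paper: from a single counterexample to $\mu$-symmetry one builds, by iterated nonforking extension (using superstability in every $\chi\in[\mu,\lambda)$), a sequence $\langle a_\alpha b_\alpha \bar d : \alpha<\lambda\rangle$ witnessing the \emph{$\mu$-order property of length $\lambda$}. Since $\lambda\ge\beth_{(2^\mu)^+}$ this forces the $\mu$-order property in all lengths, hence instability in $2^\mu$, contradicting Shelah--Villaveces. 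You have not identified this order-property bridge, and without it the argument does not go through.

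\textbf{Part (2): saturation when $\cf(\lambda)$ is small.} Writing the $\lambda$-model as $\bigcup_{i<\cf(\lambda)} N_i$ with ``$N_{i+1}$ a $\mu$-limit over $N_i$'' is ill-posed when $\cf(\lambda)<\mu$: the $N_i$ cannot all lie in $\K_\mu$, and part (1) says nothing about limit models of other sizes. Even granting a decomposition, you would need that a union of $\mu$-saturated models is $\mu$-saturated, which is nontrivial and in fact \emph{equivalent} to the symmetry property you are trying to exploit. The paper's route is cleaner: for each $\mu_0<\mu$, superstability plus $\mu_0^+$-symmetry imply $\Ksatp{\mu_0^+}$ is an AEC with L\"owenheim--Skolem number $\mu_0^+$; it has arbitrarily large models, hence one of size $\lambda$, which by categoricity is the model.

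\textbf{Part (3): tameness.} The claim that $\mu\ge\beth_{(2^{\LS(\K)})^+}$ yields $\LS(\K)$-tameness ``via the Shelah/Boney tameness theorem'' is false in ZFC: Boney's theorem requires a proper class of strongly compact cardinals. What categoricity actually gives (Shelah's Main Claim II.2.3) is only \emph{weak} tameness---tameness over \emph{saturated} models---and this requires the $\lambda$-model to be $\mu^+$-saturated, which is exactly why the hypothesis is strengthened to $\lambda\ge\beth_{(2^{\mu^+})^+}$. The frame is then built on $\Ksatp{\mu}_\mu$ with nonforking defined via $\chi$-splitting for some $\chi<H_1$; uniqueness comes from weak tameness, not full tameness.
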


Our main tool is the symmetry property of splitting (previously isolated by the first author). The key lemma deduces symmetry from failure of the order property.

\end{abstract}

\maketitle

\tableofcontents

%%%%%%%%%%%%%%%%%%%%%%%%%%%%%%%%%%%%%%%%%%%%%%%%%%%%%%%%%%%%%%%%%%%%%%%%%%%%%%%%
\section{Introduction}

The guiding conjecture for the classification of abstract elementary classes (AECs) is Shelah's categoricity conjecture.  Most progress towards this conjecture has been made under the assumption that the categoricity cardinal is a successor, e.g.\ \cite{sh394, tamenessthree, tamelc-jsl}\footnote{Recently, the second author has proved a categoricity transfer theorem without assuming that the categoricity cardinal is a successor, but assuming that the class is universal \cite{ap-universal-v6} (other partial results not assuming categoricity in a successor cardinal are in \cite{indep-aec-v4} and \cite{shelahaecbook}).}. In this paper, we assume the amalgamation property and no maximal models and deduce new structural results without having to assume that the categoricity cardinal is a successor, or even ``high-enough'' cofinality.

Consider an AEC $\K$ with amalgamation and no maximal models which is categorical in a cardinal $\lambda > \LS (\K)$. Then $\K$ is stable in every cardinal below $\lambda$ \cite[Claim 1.7.(b)]{sh394}; so if $\cf (\lambda) > \LS (\K)$, then the model of size $\lambda$ is $\cf (\lambda)$-saturated\footnote{In the sense of Galois types, i.e.\ $M$ is \emph{$\lambda$-saturated} if every Galois type over a model of size less than $\lambda$ contained in $M$ is realized in $M$. This is called ``$\lambda$-Galois-saturated'' by some authors, but here we always drop the ``Galois'' (and similarly for other concepts such as stability).}. In particular, if $\lambda$ is regular then the model of size $\lambda$ is saturated. Baldwin \cite[Problem D.1.(2)]{baldwinbook09} has asked if this generalizes to any ``sufficiently large'' cardinal $\lambda$, so let us discuss what happens if we have no control over the cofinality of $\lambda$.

One strategy to show saturation of the model of size $\lambda$ is to show that $\K$ is stable in $\lambda$. However an example of Hart and Shelah \cite{hs-example} yields (for an arbitrary $k < \omega$) a sentence $\psi_k \in L_{\omega_1, \omega}$ categorical in $\aleph_0, \aleph_1, \ldots, \aleph_k$, but not stable in $\aleph_k$, \cite {bk-hs} or see \cite[Corollary 26.5.4]{baldwinbook09}. Therefore it is not in general true that categoricity in $\lambda$ implies stability in $\lambda$.

On the other hand, it \emph{is} true if we assume a locality property for Galois types: tameness. This is due to the second author and combines the stability transfer in \cite{ss-tame-toappear-v3} and the Shelah-Villaveces theorem \cite{shvi635}.

\begin{fact}
 If $\K$ is a $\LS (\K)$-tame AEC with amalgamation, has no maximal models, and is categorical in a $\lambda > \LS (\K)$, then $\K$ is stable in every cardinal. In particular, the model of size $\lambda$ is saturated.
\end{fact}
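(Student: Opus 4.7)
The plan is to split the statement into its two parts: stability in every cardinal, and saturation of the model in $\K_\lambda$. For the first I would combine two ingredients---a starting point for stability in a single cardinal and a tameness-based upward transfer---while for the second I would carry out a standard chain construction of a saturated model and then appeal to categoricity to identify it with the unique model in $\K_\lambda$.

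For the starting point, the paragraph preceding the fact has already invoked \cite[Claim 1.7(b)]{sh394}: amalgamation, no maximal models, and categoricity in $\lambda>\LS(\K)$ yield stability in every $\mu<\lambda$, and in particular in $\LS(\K)$. (Alternatively, and closer to the attribution in the excerpt, one can obtain stability in $\LS(\K)$ directly from the Shelah--Villaveces theorem \cite{shvi635}, which does not require amalgamation.) With stability in $\LS(\K)$ in hand, I would invoke the tame stability transfer of \cite{ss-tame-toappear-v3}: for a $\LS(\K)$-tame AEC with amalgamation, stability in $\LS(\K)$ propagates to stability in every $\mu\geq\LS(\K)$. This gives the first conclusion. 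The main obstacle is this transfer step, but it is a black-box citation and its technical content need not be reproduced; it is the tameness assumption that makes this propagation possible, in contrast to the Hart--Shelah situation discussed above.

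For the ``in particular'' statement I would build a $\lambda$-saturated model of size $\lambda$ as the union of a continuous increasing chain $\seq{M_i : i<\lambda}$ in $\K_{<\lambda}$ with each $M_{i+1}$ realizing every Galois type over $M_i$. Stability in each $\mu<\lambda$ bounds the type-space so that an extension of suitable cardinality containing the required realizations can be found, and amalgamation with no maximal models guarantees such extensions exist inside $\K_{<\lambda}$. The union has size $\lambda$ and is $\lambda$-saturated; categoricity in $\lambda$ then forces it to be isomorphic to the unique model in $\K_\lambda$, which is therefore saturated.
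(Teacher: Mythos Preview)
Your argument for the ``in particular'' clause is fine, but there is a genuine gap in the stability step. The transfer you invoke --- ``for a $\LS(\K)$-tame AEC with amalgamation, stability in $\LS(\K)$ propagates to stability in every $\mu\ge\LS(\K)$'' --- is not a theorem. Tameness plus amalgamation plus stability in a single cardinal only yields stability in a restricted class of larger cardinals (think of a strictly stable first-order theory, which is $\aleph_0$-tame with amalgamation but unstable in many cardinals). What the cited transfer results actually take as input is \emph{$\mu$-superstability} in the sense of Definition~\ref{ss assm} (in particular the ``no long splitting chains'' clause), and what they output is $\mu'$-superstability for every $\mu'\ge\mu$; stability everywhere then follows.

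The fix is exactly what the paper does: use the Shelah--Villaveces theorem (Fact~\ref{shvi}) to obtain $\LS(\K)$-\emph{superstability} from categoricity, amalgamation, and no maximal models --- not merely stability in $\LS(\K)$ --- and then apply the tameness-based superstability transfer (the paper cites \cite[Proposition~10.10]{indep-aec-v4}) to conclude $\mu$-superstability, hence stability, in every $\mu\ge\LS(\K)$. Your alternative of extracting only stability in $\LS(\K)$ from Shelah--Villaveces throws away precisely the information the transfer needs. Once you make this correction the rest of your outline, including the chain construction for saturation, goes through.
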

\begin{proof}
  By the Shelah-Villaveces theorem (see Fact \ref{shvi}), $\K$ is $\LS (\K)$-superstable (see Definition \ref{ss assm}). Let $\mu \ge \LS (\K)$. By \cite[Proposition 10.10]{indep-aec-v4}, $\K$ is $\mu$-superstable, so in particular stable in $\mu$.
\end{proof}

In this paper, we do \emph{not} assume tameness: we show that we can instead take $\lambda$ sufficiently big (this is (\ref{abstract-1}) of the theorem in the abstract, see Corollary \ref{cor-mu-sat} for the proof):

\begin{theorem}\label{main-thm-sat}
  Let $\K$ be an AEC with amalgamation and no maximal models and let $\mu > \LS (\K)$. If $\K$ is categorical in a $\lambda \ge \beth_{(2^{\mu})^+}$, then the model of size $\lambda$ is $\mu$-saturated. 
\end{theorem}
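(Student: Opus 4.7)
The strategy is to exhibit, inside the model of size $\lambda$, a saturated $\mu$-limit model over every submodel of size $<\mu$. Two ingredients are needed: (a) that some $\mu$-limit model in $\K_\mu$ is in fact $\mu$-saturated, and (b) that such a limit model embeds, over any prescribed small piece, into the categoricity model.

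For (a), I would first use the Shelah--Villaveces theorem (Fact~\ref{shvi}) to get $\LS(\K)$-superstability from the categoricity hypothesis, then transfer superstability up to $\mu$ via \cite[Proposition 10.10]{indep-aec-v4}. This yields stability in $\mu$ and existence of $(\mu,\theta)$-limit models for each regular $\theta \le \mu$. Now I would invoke Theorem~\ref{abstract-thm}(\ref{abstract-1}): any two limits in $\K_\mu$ over the same base are isomorphic over that base. A standard argument then shows that any limit model in $\K_\mu$ realizes every Galois type over every small submodel it contains: one builds a new limit chain inside the given limit model whose successors realize a prescribed type (using stability to list types, superstability to take universal extensions), then uses uniqueness to identify the resulting limit with the original. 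Hence every $\mu$-limit model in $\K_\mu$ is $\mu$-saturated.

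For (b), I would use Shelah's Hanf-number/EM machinery applied to the categoricity model. The threshold $\lambda \ge \beth_{(2^\mu)^+}$ is precisely the Hanf number for ``omitting'' the non-universality of extensions over models of size $\le\mu$: starting from any $M_0 \in \K_{<\mu}$ sitting inside the model of size $\lambda$, one produces inside that model an increasing continuous chain $\langle N_i : i \le \mu \rangle$ in $\K_\mu$ with $N_0 \supseteq M_0$ and $N_{i+1}$ universal over $N_i$. The terminal model $N_\mu$ is then a $(\mu,\cf(\mu))$-limit (or longer) over $M_0$ sitting inside the categoricity model. Combined with (a), $N_\mu$ realizes every Galois type over $M_0$, which gives $\mu$-saturation of the model of size $\lambda$.

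The main obstacle is the uniqueness-to-saturation passage in (a): uniqueness of limits in $\K_\mu$ must be leveraged to upgrade the essentially trivial $\cf(\mu)$-saturation of a $(\mu,\cf(\mu))$-limit to full $\mu$-saturation, which is not automatic when $\mu$ is singular and the base submodels have size strictly between $\cf(\mu)$ and $\mu$. One has to iterate uniqueness together with superstability to ensure that \emph{all} such types are captured. Ingredient (b) is also delicate because $\cf(\lambda)$ may be arbitrarily small, so no naive saturation-by-cofinality argument works; the Hanf-number threshold $\beth_{(2^\mu)^+}$ is genuinely needed in lieu of any cofinality assumption on $\lambda$.
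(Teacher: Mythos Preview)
Your two-step strategy is a legitimate alternative to the paper's route, but both steps have problems as written.

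In (a), the transfer of superstability via \cite[Proposition~10.10]{indep-aec-v4} is illegitimate: that proposition requires $\mu$-tameness, which you do not have. The fix is easy, though---apply Fact~\ref{shvi} directly at each $\chi\in[\LS(\K),\lambda)$; categoricity in $\lambda$ already gives $\chi$-superstability for every such $\chi$, so no transfer is needed.

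The more serious conceptual error is your account of where the threshold $\lambda\ge\beth_{(2^\mu)^+}$ enters. It is \emph{not} used in (b). Producing a $(\mu,\omega)$-limit chain over a prescribed $M_0$ inside the categoricity model is a soft consequence of categoricity and $\mu$-superstability via Ehrenfeucht--Mostowski models: write the model as $EM_\tau(\lambda,\Phi)$ and use stability in $\mu$ to enlarge index sets so as to obtain universal extensions (this is standard from \cite{sh394} and needs no Hanf bound). The Hanf threshold is needed precisely for the black box you invoke in (a), namely Theorem~\ref{abstract-thm}(\ref{abstract-1}). Uniqueness of limit models in $\mu$ is the paper's main technical contribution, proved by showing that failure of $\mu$-symmetry yields the $\mu$-order property of every length below $\hanf{\mu}$, hence of all lengths (Fact~\ref{shelah-hanf}), contradicting stability below $\lambda$. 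So when you write that the Hanf bound is ``genuinely needed in lieu of any cofinality assumption'' for (b), you have the dependency exactly backwards.

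The paper sidesteps your (b) entirely. For each $\mu_0\in[\LS(\K),\mu)$ it obtains $\mu_0$- and $\mu_0^+$-superstability together with $\mu_0^+$-symmetry (Theorem~\ref{sym-categ}), then invokes Fact~\ref{union-sat-monica} to conclude that $\Ksatp{\mu_0^+}$ is an AEC with $\LS(\Ksatp{\mu_0^+})=\mu_0^+$. This class has arbitrarily large models, hence one of size $\lambda$, which by categoricity is the categoricity model. This yields $\mu_0^+$-saturation for every $\mu_0<\mu$ without ever having to locate limit chains over a prescribed base inside a model of uncontrolled cofinality.
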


Note that we only obtain $\mu$-saturation, not full saturation (the slogan is that categoricity cardinals above $\beth_{(2^{\mu})^+}$ behave as if they had cofinality at least $\mu$). However if $\lambda = \beth_\lambda$ this gives full saturation (this is used to obtain a downward categoricity transfer, see Corollary \ref{easy-downward}). Moreover $\mu$-saturation is enough for many applications, as many of the results of \cite{sh394} only assume categoricity in a $\lambda$ with $\cf (\lambda) > \mu$ (for a fixed $\mu \ge \LS (\K)$). For example, we show how to obtain \emph{weak tameness} (i.e.\ tameness over saturated models) from categoricity in a big-enough cardinal (this is Theorem \ref{weak-tameness-from-categ}). We can then build a local notion of independence: a good $\mu$-frame (this is (\ref{abstract-3}) of the theorem in the abstract, see Corollary \ref{good-frame-categ} for a proof): 

\begin{theorem}
  Let $\mu \ge \beth_{(2^{\LS (\K)})^+}$. Assume that $\K$ is categorical in some $\lambda \ge \beth_{(2^{\mu^+})^+}$. Then there exists a type-full good $\mu$-frame with underlying class the saturated models in $\K_\mu$.
\end{theorem}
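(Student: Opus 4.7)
The plan is to build the good $\mu$-frame on the subclass of saturated models in $\K_\mu$, with non-forking defined from non-$\mu$-splitting. The cardinal thresholds are set precisely so that: $\lambda \ge \beth_{(2^{\mu^+})^+}$ lets me apply Theorem \ref{main-thm-sat} with parameter $\mu^+$, yielding that the model of size $\lambda$ is $\mu^+$-saturated and hence that a saturated model of size $\mu$ exists and is unique; and $\mu \ge \beth_{(2^{\LS(\K)})^+}$ is exactly what is needed to invoke Theorem \ref{weak-tameness-from-categ} to get weak tameness at $\mu$, i.e.\ tameness over $\mu$-saturated models. In parallel one collects $\mu$-superstability (from Fact \ref{shvi} together with the transfer argument of \cite[Proposition 10.10]{indep-aec-v4}) and stability in $\mu$ (from \cite[Claim 1.7(b)]{sh394}), and verifies that the subclass of saturated models in $\K_\mu$ inherits amalgamation, joint embedding, no maximal models, and is closed under the relevant chains (using the uniqueness of limit models, part (1) of the main theorem in the abstract).

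Next I would set up the frame itself: declare every nonalgebraic Galois type over a saturated model in $\K_\mu$ to be basic, and say such a type does not fork over a sub-saturated-model $M$ iff it has an approximation that does not $\mu$-split over some suitable size-$\LS(\K)$ witness sitting inside $M$. The standard non-splitting machinery then gives existence of non-forking extensions, monotonicity, local character (from $\mu$-superstability), and continuity essentially for free; uniqueness of non-forking extensions is where weak tameness enters, bridging the built-in uniqueness statement for non-splitting over a fixed small witness to uniqueness over arbitrary saturated models in $\K_\mu$.

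The main obstacle is symmetry, and this is precisely why the present paper isolates the symmetry property of splitting: the key lemma advertised in the abstract derives symmetry from the failure of the order property. Since $\K$ is stable in $\mu$, sufficiently long order-property sequences cannot exist, so the configuration needed to refute symmetry is unavailable; hence symmetry of non-splitting, and therefore of non-forking, holds on the saturated models in $\K_\mu$. Once symmetry is in place, the remaining good-frame axioms fall out routinely: density of basic types is trivial since every nonalgebraic type is basic, and transitivity follows from uniqueness plus existence. I expect verifying symmetry — and reconciling it with weak tameness to establish the uniqueness axiom — to be the most delicate step; the other axioms should be direct adaptations of existing non-splitting frame constructions.
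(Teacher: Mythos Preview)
Your approach is essentially the paper's: obtain $\mu^+$-saturation of the categorical model (Corollary \ref{cor-mu-sat}), deduce $(\chi,\mu)$-weak tameness for some $\chi < H_1$ (Fact \ref{weak-tameness-from-categ-fact}), collect superstability in all cardinals in $[\chi,\mu]$ from Shelah--Villaveces, obtain $\mu$-symmetry from the order-property argument (Theorem \ref{sym-categ}), and then build the frame on the saturated models of $\K_\mu$ with non-forking given by $\chi$-forking (this is exactly Theorem \ref{good-frame-weak-tameness} applied with $(\chi,\mu)$ in place of $(\mu,\lambda)$).

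Two imprecisions to tighten. First, the non-forking witness should live in $\K_\chi$, not $\K_{\LS(\K)}$: weak tameness is only guaranteed from some $\chi < H_1$ (which may well exceed $\LS(\K)$), and it is $\chi$-splitting together with $(\chi,\mu)$-weak tameness that yields the uniqueness axiom; a witness of size $\LS(\K)$ would require tameness from $\LS(\K)$, which you do not have. Second, ``stable in $\mu$'' is not what rules out the order property: the argument (Theorem \ref{sym-from-superstab}) needs stability in $2^{\mu}$ (which does follow from categoricity above $2^{\mu}$) to invoke Fact \ref{stab-facts-op}, and then Fact \ref{shelah-hanf} to bound the length of any $\mu$-order-property sequence below $\hanf{\mu}$. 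Neither issue breaks the argument, but both should be stated correctly; also, the appeal to \cite[Proposition 10.10]{indep-aec-v4} is unnecessary, since Shelah--Villaveces already gives superstability in every $\chi' \in [\chi,\lambda)$ directly.
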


This improves on \cite[Theorem 7.4]{ss-tame-toappear-v3}, which assumed categoricity in a successor (and a higher Hanf number bound). This also (partially) answers \cite[Remark 4.9.(1)]{sh394} which asked whether there is a parallel to forking in categorical AECs with amalgamation. 

The key to the proof of Theorem \ref{main-thm-sat} is a close study of the symmetry property for splitting, identified by the first author in \cite{vandieren-symmetry-v1}. There it was shown (assuming superstability in $\mu$) that symmetry of $\mu$-splitting is equivalent to the continuity of reduced towers of size $\mu$, which itself implies uniqueness of limit models in $\mu$. It was also shown that symmetry of $\mu$-splitting follows from categoricity in $\mu^+$. In \cite{vv-symmetry-transfer-v2}, we improved this by only requiring categoricity in a $\lambda$ of cofinality bigger than $\mu$:

\begin{fact}[Corollary 5.2 in \cite{vv-symmetry-transfer-v2}]\label{sym-categ-fact}
  Let $\K$ be an AEC with amalgamation and no maximal models. Let $\mu \ge \LS (\K)$. Assume that $\K$ is categorical in a cardinal $\lambda$ with $\cf (\lambda) > \mu$. Then $\K$ is $\mu$-superstable and has $\mu$-symmetry. In particular \cite[Theorem 0.1]{vv-symmetry-transfer-v2}, it has uniqueness of limit models in $\mu$: for any $M_0, M_1, M_2 \in \K_\mu$, if $M_1$ and $M_2$ are limit over $M_0$, then $M_1 \cong_{M_0} M_2$.
\end{fact}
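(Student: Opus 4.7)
The plan is to split the conclusion into three pieces — $\mu$-superstability, $\mu$-symmetry, and uniqueness of limit models in $\mu$ — and dispatch each using machinery already advertised in the paper's introduction. For $\mu$-superstability, the Shelah--Villaveces theorem supplies $\LS(\K)$-superstability from categoricity above $\LS(\K)$. To transfer up to $\mu$ without assuming tameness, I would use that $\cf(\lambda) > \mu$ forces the model of size $\lambda$ to be $\mu^+$-saturated; every model in $\K_\mu$ embeds into it, and the ``no long $\mu$-splitting chain'' characterization of $\mu$-superstability transfers from $\LS(\K)$ to $\mu$ by running any would-be bad chain as a chain of submodels of the saturated model of size $\lambda$ and comparing it against the $\LS(\K)$-level theorem.

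For $\mu$-symmetry, my approach is by contradiction via the order property. Categoricity in $\lambda$ yields stability in every $\mu' < \lambda$ by \cite[Claim 1.7(b)]{sh394}; the standard AEC syntactic-to-semantic translation then rules out the $\mu$-order property of length $\lambda$. I would then appeal to the key lemma advertised in this paper's abstract, which extracts symmetry from failure of the order property. In contrapositive form: if $\mu$-symmetry fails at a $\mu$-saturated $M$, witnessed by elements $a, b$ and a $\mu$-sized approximation $M_0 \le M$, iterate the asymmetry cofinally in $\lambda$ to produce a Galois-order-type sequence of length $\lambda$. The hypothesis $\cf(\lambda) > \mu$ is used precisely to ensure that the inductive construction does not collapse at any limit stage below $\lambda$, so the resulting sequence genuinely witnesses the $\mu$-order property and contradicts the stability bound.

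Once $\mu$-superstability and $\mu$-symmetry are both in hand, the ``in particular'' clause is immediate from the first author's earlier equivalence in \cite{vandieren-symmetry-v1}: symmetry of $\mu$-splitting implies continuity of reduced towers of size $\mu$, which in turn implies uniqueness of limit models in $\mu$. No new ideas are needed in this last step.

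The main obstacle is clearly the symmetry step: converting a single pointwise failure of symmetry into an asymmetric Galois-type sequence that is both long enough to contradict stability \emph{and} coherent enough to survive amalgamation at every limit stage. The bookkeeping has to guarantee that the ``asymmetric pair'' extracted at each stage remains incompatible with all data chosen at earlier stages, which is where superstability (furnishing non-splitting extensions that can be glued along the iteration) is essential. Once that technical heart is in place, Steps 1 and 3 are largely bookkeeping on top of Shelah--Villaveces and \cite{vandieren-symmetry-v1}.
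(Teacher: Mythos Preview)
The statement you are proving is a \emph{Fact} cited from \cite{vv-symmetry-transfer-v2}; this paper does not supply its own proof, but it does indicate (in the proof of Corollary~\ref{good-frame-categ}) that the argument from \cite{vv-symmetry-transfer-v2} ``only uses that the model of size $\lambda$ is $\mu^+$-saturated.'' That is the actual role of the hypothesis $\cf(\lambda) > \mu$: it guarantees that the categoricity model is $\mu^+$-saturated, and symmetry is then deduced by a tower/saturation argument internal to \cite{vv-symmetry-transfer-v2}, not via the order property.

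Your proposed route to symmetry has a genuine gap. You want to invoke this paper's key lemma (Lemma~\ref{sym-lem}) to obtain the $\mu$-order property of length $\lambda$ from failure of $\mu$-symmetry, and then contradict stability below $\lambda$. But the $\mu$-order property of a \emph{single} length $\lambda$ does not contradict stability in cardinals below $\lambda$; to reach instability (Fact~\ref{stab-facts-op}) you need the full $\mu$-order property, and Fact~\ref{shelah-hanf} only delivers that once you have all lengths below $\hanf{\mu}$. The hypothesis $\cf(\lambda) > \mu$ gives no lower bound on $\lambda$ relative to $\hanf{\mu}$ --- for instance $\lambda = \mu^+$ is permitted --- so the contradiction does not close. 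Your explanation that $\cf(\lambda) > \mu$ is needed ``to ensure that the inductive construction does not collapse at any limit stage'' is also off: the construction in Lemma~\ref{sym-lem} uses no cofinality hypothesis whatsoever, only superstability in $[\mu,\lambda)$. In effect you have imported this paper's method, which \emph{trades} the cofinality hypothesis for the Hanf-number bound $\lambda \ge \hanf{\mu}$ (Theorem~\ref{sym-categ}), back into a statement whose only leverage is the cofinality hypothesis; the two are not interchangeable.

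On $\mu$-superstability your detour is unnecessary: the Shelah--Villaveces theorem as stated in Fact~\ref{shvi} gives $\mu$-superstability directly from categoricity in any $\lambda > \mu$, with no transfer step from $\LS(\K)$ required. Your final step, deducing uniqueness of limit models from superstability plus symmetry via Fact~\ref{sym-uq-lim}, is correct.
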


Here we replace the cofinality assumption on the categoricity cardinal with the assumption that the categoricity cardinal is big enough (this also proves (\ref{abstract-2}) of the theorem in the abstract, see Theorem \ref{sym-categ}):

\begin{theorem}\label{sym-from-categ-0}
  Let $\K$ be an AEC with amalgamation and no maximal models. Let $\mu \ge \LS (\K)$. If $\K$ is categorical in a $\lambda \ge \beth_{(2^{\mu})^+}$, then $\K$ is $\mu$-superstable and has $\mu$-symmetry. In particular, $\K$ has uniqueness of limit models in $\mu$.
\end{theorem}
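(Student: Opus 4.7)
The plan is to follow the same template as the proof of Fact \ref{sym-categ-fact}, but to replace the use of the cofinality hypothesis $\cf(\lambda) > \mu$ by a Hanf-number argument routed through the order property. This is why the bound $\lambda \ge \beth_{(2^{\mu})^+}$ appears: it is exactly the threshold at which Morley-style EM arguments force failure of the order property over models of size $\le \mu$.

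First I would handle the superstability half, which does not really require the strengthened bound. By the Shelah-Villaveces theorem (Fact \ref{shvi}), categoricity in any $\lambda > \LS(\K)$ already yields $\LS(\K)$-superstability, and the transfer result from \cite{indep-aec-v4} cited in the introduction promotes this to $\mu$-superstability for every $\mu \ge \LS(\K)$. In particular $\K$ is stable in $\mu$, nonsplitting exists and has the required extension and uniqueness properties, and limit models of the same cofinality over a base are isomorphic.

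Next, I would extract failure of the order property at length $\mu^+$ (over parameters from models in $\K_{\le \mu}$) from categoricity in $\lambda$. If some such order property held, a standard AEC Ehrenfeucht--Mostowski construction --- exactly as in Shelah's computations of Hanf numbers for the order property in \cite{sh394} --- would produce many pairwise non-isomorphic models of size $\lambda$ for every $\lambda$ above $\beth_{(2^\mu)^+}$, contradicting categoricity in $\lambda$. This step is where the Hanf-number bound $\lambda \ge \beth_{(2^{\mu})^+}$ is consumed, and it does not use any information about $\cf(\lambda)$, which is the whole point of avoiding the hypothesis in Fact \ref{sym-categ-fact}.

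Finally, and this is the step the authors flag in the abstract as ``the key lemma'', I would deduce $\mu$-symmetry for splitting from the just-established failure of the order property. Roughly, an asymmetric splitting configuration of $a$ over $b$ relative to some $M \in \K_\mu$ should be amplified, using $\mu$-superstability to build an increasing chain of limit models and realizations, into an $\mu^+$-sequence witnessing the order property over a model in $\K_\mu$; the failure of that order property then forces symmetry. With $\mu$-superstability and $\mu$-symmetry in hand, the uniqueness of limit models in $\mu$ follows from the first author's theorem that $\mu$-symmetry is equivalent to continuity of reduced towers of size $\mu$ and implies uniqueness of limit models (as recalled just before Fact \ref{sym-categ-fact}). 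The main obstacle will be precisely this key lemma: cleanly converting the combinatorial asymmetry of a splitting chain --- built from Galois types, with no first-order syntax available --- into a genuine order-property configuration over a single model of size $\mu$, at the length $\mu^+$ demanded by the Hanf-number argument above.
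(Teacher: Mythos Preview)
Your overall architecture --- categoricity rules out the order property, while failure of $\mu$-symmetry produces it --- is exactly the paper's, but two steps in your execution are genuinely broken, and they are related.

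First, the transfer from $\LS(\K)$-superstability to $\mu$-superstability that you invoke from \cite{indep-aec-v4} requires tameness (reread the hypothesis of the Fact in the introduction where Proposition~10.10 is cited); you do not have tameness here. The fix is simpler than what you propose: the Shelah--Villaveces theorem (Fact~\ref{shvi}) applies at \emph{every} $\chi\in[\LS(\K),\lambda)$, not just at $\LS(\K)$, so you get $\chi$-superstability for all such $\chi$ directly. This is not a cosmetic point, because the paper needs superstability at many cardinals below $\hanf{\mu}$, not just at $\mu$.

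Second, your ``length $\mu^+$'' plan does not close. The claim that the $\mu$-order property of length $\mu^+$ alone lets an EM construction produce many models of size $\lambda$ is unjustified: without compactness you cannot stretch a length-$\mu^+$ sequence over arbitrary linear orders; extracting an indiscernible witness by Morley's method is precisely what costs the Hanf number and is the content of Fact~\ref{shelah-hanf}. Accordingly, the key lemma must manufacture the $\mu$-order property of every length $\lambda_0 < \hanf{\mu}$, and to carry out the inductive construction of $\seq{N_\alpha, a_\alpha, b_\alpha}_{\alpha<\lambda_0}$ one needs the extension property for $\mu$-nonforking over models of cardinality up to $\lambda_0$ (Fact~\ref{mu-forking-props}(\ref{mu-forking-3})), which in turn needs superstability in every $\chi\in[\mu,\lambda_0)$. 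That is where the full strength of Shelah--Villaveces is spent. The paper then finishes not via ``many models'' but via Fact~\ref{stab-facts-op}: the $\mu$-order property implies instability in $2^{\mu}$, contradicting the superstability just obtained. Once you repair these two points your outline coincides with the paper's proof (Theorem~\ref{sym-from-superstab} combined with Theorem~\ref{sym-categ}), and uniqueness of limit models then follows from Fact~\ref{sym-uq-lim} as you say.
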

\begin{remark}\label{sym-from-categ-0-rmk}
  This gives a proof (assuming amalgamation and a high-enough categoricity cardinal) of the (in)famous \cite[Theorem 3.3.7]{shvi635}, where a gap was identified in the first author's Ph.D.\ thesis. The gap was fixed assuming categoricity in $\mu^+$ in \cite{vandierennomax, nomaxerrata} (see also the exposition in \cite{gvv-toappear-v1_2}). In \cite[Corollary 6.10]{bg-v9}, this was improved to categoricity in an arbitrary $\lambda > \mu$ provided that $\mu$ is big-enough and the class satisfies strong locality assumptions (full tameness and shortness and the extension property for coheir). In \cite[Theorem 7.11]{ss-tame-toappear-v3}, only tameness was required but the categoricity had to be in a $\lambda$ with $\cf (\lambda) > \mu$. Still assuming tameness, this is shown for categoricity in any $\lambda \ge \beth_{(2^{\mu})^+}$ in \cite[Theorem 7.1]{bv-sat-v3} and in any $\lambda > \mu$ in \cite[Theorem 6.4]{vv-symmetry-transfer-v2}. Finally without tameness the result was proved for any $\lambda$ with $\cf(\lambda) > \mu$ in \cite[Theorem 0.1]{vv-symmetry-transfer-v2}.
\end{remark}

The proof of Theorem \ref{sym-from-categ-0} is technical but conceptually not hard: we show that a failure of $\mu$-symmetry would give the order property, which in turn would imply instability below the categoricity cardinal. The idea of using the order property to prove symmetry of an independence relation is due to Shelah, \cite[Theorem 6.10(ii)]{sh54} or see \cite[Theorem III.4.13]{shelahfobook}. In \cite{bgkv-v2}, an abstract generalization of Shelah's proof to any independence notion satisfying extension and uniqueness was given. Here we adapt the proof of \cite{bgkv-v2}  to splitting. This uses the extension property of splitting for models of different sizes from \cite{vv-symmetry-transfer-v2}.

In general, we obtain that an AEC with amalgamation categorical in a high-enough cardinal has many structural properties that were previously only known for AECs categorical in a cardinal of high-enough \emph{cofinality}, or even just in a successor. This improves several classical results from Shelah's milestone study of categorical AECs with amalgamation \cite{sh394}.

This paper was written while the second author was working on a Ph.D.\ thesis under the direction of Rami Grossberg at Carnegie Mellon University and he would like to thank Professor Grossberg for his guidance and assistance in his research in general and in this work specifically. 

\section{Background}

Throughout this paper, we assume:

\begin{hypothesis}\label{ap-hyp}
  $\K$ is an AEC with amalgamation.
\end{hypothesis}

For convenience, we fix a big-enough monster model $\sea$ and work inside $\sea$.   This is possible since by Remark \ref{jep remark}, we will have the joint embedding property in addition to the amalgamation property for models of the relevant cardinalities.

Many of the pre-requisite definitions and notations used in this paper can be found in \cite{gvv-toappear-v1_2}.  Here we recall the more specialized concepts that we use explicitly.
We begin by recalling the definition of non-splitting, a notion of independence from \cite[Definition 3.2]{sh394}. 

\begin{definition}\label{def:splitting}
A type $p \in \gS (N)$ does not $\mu$-split over $M$ if and only if for any $N_1, N_2 \in \K_\mu$ such that $M \lea N_\ell \lea N$ for $\ell = 1,2$, and any $f: N_1 \cong_M N_2$, we have $f (p \rest N_1) = p \rest N_2$
\end{definition}

The definition of superstability below is already implicit in \cite{shvi635} and has since then been studied in several papers, e.g.\ \cite{vandierennomax, gvv-toappear-v1_2, indep-aec-v4, bv-sat-v3, gv-superstability-v2, vv-symmetry-transfer-v2}. We will use the formulation from \cite[Definition 10.1]{indep-aec-v4}:

\begin{definition}\label{ss assm}
$\K$ is \emph{$\mu$-superstable} (or \emph{superstable in $\mu$})
%\svnote{I added the alternative notation in bracket.} 
if:

  \begin{enumerate}
    \item $\mu \ge \LS (\K)$.
    \item $\K_\mu$ is nonempty, has joint embedding, and no maximal models.
    \item $\K$ is stable in $\mu$\footnote{That is, $|\gS (M)| \le \mu$ for all $M \in \K_\mu$. Some authors call this ``Galois-stable''.}, and:
    \item\label{split assm} $\mu$-splitting in $\K$ satisfies the following
  locality (sometimes called continuity) and ``no long splitting chains''
  properties:
  
  % \end{assumption}
% 
% % In addition to the existence, extension and uniqueness properties of
% % non-$\mu$-splitting, we will assume locality and no long $\mu$-splitting
% % chains:
% 
% \begin{assumption}\label{splitting assumptions}
For any limit ordinal $\alpha < \mu^+$, for every sequence $\langle M_i\mid i<\alpha\rangle$ of
  models of cardinality $\mu$ with $M_{i+1}$ universal over $M_i$ and for every $p\in\gaS(\bigcup_{i < \alpha} M_i)$, we have that:
\begin{enumerate}
\item\label{locality} If for every $i<\alpha$, the type $p\restriction
  M_i$ does not $\mu$-split over $M_0$, then $p$ does not $\mu$-split over
  $M_0$.
\item\label{no long splitting chain} There exists $i<\alpha$ such that $p$
does not $\mu$-split over $M_i$.
\end{enumerate}
\end{enumerate}

\end{definition}
\begin{remark}\label{jep remark}
  By our global hypothesis of amalgamation (Hypothesis \ref{ap-hyp}), if $\K$ is $\mu$-superstable, then $\K_{\ge \mu}$ has joint embedding.
\end{remark}
\begin{remark}
  By the weak transitivity property of $\mu$-splitting \cite[Proposition 3.7]{ss-tame-toappear-v3}, condition (\ref{locality}) could have been omitted (i.e.\ it follows from the rest).
\end{remark}

The main tool of this paper is the concept of symmetry over limit models which was identified in \cite{vandieren-symmetry-v1}:
\begin{definition}\label{sym defn}
An abstract elementary class  exhibits \emph{symmetry for $\mu$-splitting} (or \emph{$\mu$-symmetry} for short) if  whenever models $M,M_0,N\in\K_\mu$ and elements $a$ and $b$  satisfy the conditions \ref{limit sym cond}-\ref{last} below, then there exists  $M^b$  a limit model over $M_0$, containing $b$, so that $\tp(a/M^b)$ does not $\mu$-split over $N$.  See Figure \ref{fig:sym}.
\begin{enumerate} 
\item\label{limit sym cond} $M$ is universal over $M_0$ and $M_0$ is a limit model over $N$.
\item\label{a cond}  $a\in M\backslash M_0$.
\item\label{a non-split} $\tp(a/M_0)$ is non-algebraic and does not $\mu$-split over $N$.
\item\label{last} $\tp(b/M)$ is non-algebraic and does not $\mu$-split over $M_0$. 
   
\end{enumerate}
\end{definition}

\begin{figure}[h]
\begin{tikzpicture}[rounded corners=5mm, scale=3,inner sep=.5mm]
\draw (0,1.25) rectangle (.75,.5);
\draw (.25,.75) node {$N$};
\draw (0,0) rectangle (3,1.25);
\draw (0,1.25) rectangle (1,0);
\draw (.85,.25) node {$M_0$};
\draw (3.2, .25) node {$M$};
\draw[color=gray] (0,1.25) rectangle (1.5, -.5);
\node at (1.1,-.25)[circle, fill, draw, label=45:$b$] {};
\node at (2,.75)[circle, fill, draw, label=45:$a$] {};
\draw[color=gray] (1.75,-.25) node {$M^{b}$};
\end{tikzpicture}
\caption{A diagram of the models and elements in the definition of symmetry. We assume the type $\tp(b/M)$ does not $\mu$-split over $M_0$ and $\tp(a/M_0)$ does not $\mu$-split over $N$.  Symmetry implies the existence of $M^b$ a limit model over $M_0$ containing $b$, so that $\tp(a/M^b)$  does not $\mu$-split over $N$.} \label{fig:sym}
\end{figure}
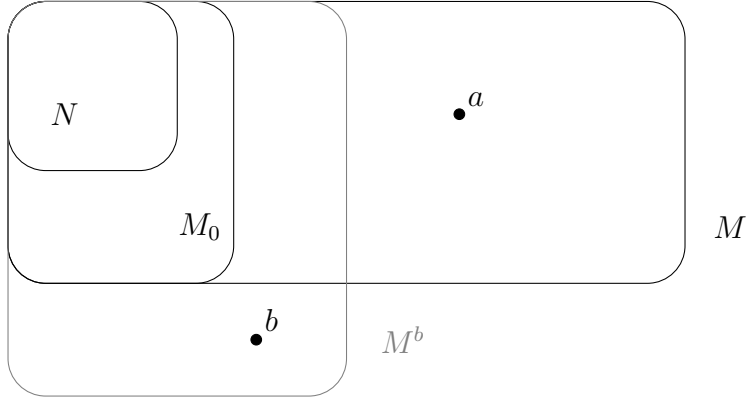

We recall a few results of the first author showing the importance of the symmetry property:

\begin{fact}[Theorem 5 in \cite{vandieren-symmetry-v1}]\label{sym-uq-lim}
  If $\K$ is $\mu$-superstable and has $\mu$-symmetry, then for any $M_0, M_1, M_2 \in \K_\mu$, if $M_1$ and $M_2$ are limit models over $M_0$, then $M_1 \cong_{M_0} M_2$.
\end{fact}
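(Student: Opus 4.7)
The plan is to go through reduced towers, which are the standard tool for analyzing limit models. Recall that a $(\mu, \delta)$-tower consists of a continuous $\lea$-increasing chain $\langle M_i : i < \delta\rangle \subseteq \K_\mu$ together with designated elements $a_i \in M_{i+1}$ and submodels $N_i \lea M_i$ of size $\mu$ such that $\tp(a_i/M_i)$ does not $\mu$-split over $N_i$. A tower is \emph{reduced} if for every $i < j < \delta$ and every $M' \in \K_\mu$ extending $M_j$, we have $M' \cap M_i = M_j \cap M_i$ (computed in $\sea$). First, I would recall from the Shelah--Villaveces machinery (available here because of $\mu$-superstability) that any limit model over $M_0$ is the union of some tower, and that any tower can be extended to a reduced tower of the same length; this is purely a tower-manipulation argument that does not use symmetry.

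The core step is to show that $\mu$-symmetry forces reduced towers to be \emph{continuous} at every limit ordinal $i < \delta$, i.e.\ $M_i = \bigcup_{j < i} M_j$. I would argue by contradiction: suppose there is $b \in M_i \setminus \bigcup_{j<i} M_j$. Using the no-long-splitting-chains clause of Definition \ref{ss assm}, one locates some $j^* < i$ such that $\tp(b / \bigcup_{j < i} M_j)$ does not $\mu$-split over $M_{j^*}$. Now one of the elements $a_{j'}$ with $j' > j^*$ plays the role of $a$ in Definition \ref{sym defn}, while $b$ plays the role of $b$; the non-splitting witness $N_{j'}$ plays the role of $N$, a suitable $M_{k}$ with $k \ge j'$ plays the role of $M$, and $M_{j'}$ the role of $M_0$. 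Applying $\mu$-symmetry then yields a limit model $M^b$ containing $b$ with $\tp(a_{j'}/M^b)$ non-$\mu$-splitting over $N_{j'}$; amalgamating $M^b$ over $M_{j'+1}$ produces an extension $M'$ of $M_{j'+1}$ in which $b$ ``should already have been'' inside $M_{j'+1}$, contradicting the reduced condition at $(j', i)$.

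Once continuity of reduced towers is in hand, uniqueness of limit models follows by a fairly standard back-and-forth. Given $M_1, M_2$ limit over $M_0$, present each as the union of a reduced tower of some length $\delta_\ell$, $\ell = 1,2$, of cardinality $\mu$. The $(\mu, \omega)$-limit model over $M_0$ is unique up to isomorphism over $M_0$ by an entirely general argument, so it suffices to show that any $(\mu, \delta)$-limit is isomorphic over $M_0$ to a $(\mu, \omega)$-limit. Using continuity, one interleaves an $\omega$-cofinal sequence with the reduced tower of length $\delta$, matching types at each stage via the non-$\mu$-splitting witnesses and appealing to the usual uniqueness of non-splitting extensions over universal chains; continuity ensures that the constructed partial isomorphism extends through all limit stages.

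The main obstacle will be the symmetry-to-continuity step. The delicate part is not the abstract contradiction scheme but the bookkeeping: one must choose the indices $j^*, j', k$ and the ambient models so that the hypotheses \ref{limit sym cond}--\ref{last} of Definition \ref{sym defn} are all satisfied simultaneously (in particular that the relevant $M_0$-analogue is limit over $N$ and that the relevant $M$-analogue is universal over that $M_0$-analogue), and then translate the conclusion ``$\tp(a/M^b)$ does not $\mu$-split over $N$'' into an actual violation of reducedness. The remaining superstability ingredients (stability, no long splitting chains, uniqueness of non-splitting extensions to universal extensions) are then used off-the-shelf.
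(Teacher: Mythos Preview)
The paper does not prove this statement; it is quoted as a Fact from \cite{vandieren-symmetry-v1}, so there is no in-paper argument to compare against. Your outline does track the strategy of the cited source: pass to reduced towers, use $\mu$-symmetry to show reduced towers are continuous (this is exactly the implication $(\ref{symmetry item})\Rightarrow(\ref{reduced are continuous})$ of Fact~\ref{sym-charact} here), and then deduce uniqueness of limit models from continuity. So at the level of architecture you are aligned with the intended proof.

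Two concrete problems in your sketch. First, your definition of \emph{reduced} is misstated: as written, the condition ``for every $M'\in\K_\mu$ extending $M_j$, $M'\cap M_i = M_j\cap M_i$'' is vacuously false for generic $M'$. The correct notion quantifies over \emph{tower}-extensions $(\bar M',\bar a,\bar N)$ of the entire tower (same $a_i$'s and $N_i$'s, with $M_i \lea M_i'$ and the non-splitting conditions preserved) and requires $M_i' \cap \bigcup_k M_k = M_i$. Reducedness is a property relative to structure-preserving tower extensions, not arbitrary model extensions, and the contradiction in the continuity step is obtained precisely by exhibiting such a tower-extension with $b\in M_{j'}'$; your ``amalgamating $M^b$ over $M_{j'+1}$'' phrasing suppresses exactly this point.

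Second, the passage from continuity to uniqueness is not the ``interleave an $\omega$-cofinal sequence'' argument you describe, and that description cannot work when $\cf(\delta)>\omega$. In the cited work one instead runs a back-and-forth between continuous reduced towers of a common carefully chosen length (of the form $\mu\cdot\alpha$), or uses relatively full towers whose top is simultaneously a limit of every admissible cofinality; it is not a reduction to the $(\mu,\omega)$-limit. Your symmetry-to-continuity paragraph, by contrast, has the right shape, and you correctly flag the index bookkeeping there as the delicate point.
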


For $\lambda > \LS (\K)$, we will write $\Ksatp{\lambda}$ for the class of $\lambda$-saturated models in $\K_{\ge \lambda}$ (we order it with the strong substructure relation induced by $\K$). By \cite{vandieren-sat-v1} superstability and symmetry together imply that the union of certain chains of saturated models is saturated. This has an easier formulation in \cite[Theorem 6.6]{vv-symmetry-transfer-v2}:

\begin{fact}\label{union-sat-monica}
  Assume $\K$ is $\mu$-superstable, $\mu^+$-superstable, and has $\mu^+$-symmetry. Then $\Ksatp{\mu^+}$ is an AEC with $\LS (\Ksatp{\mu^+}) = \mu^+$.
\end{fact}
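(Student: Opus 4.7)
The plan is to verify that $\Ksatp{\mu^+}$, ordered by the restriction of $\le_\K$, satisfies the AEC axioms with $\LS$ number $\mu^+$. Coherence, closure under isomorphism, and the smoothness/direct-limit axioms transfer directly from $\K$ because the order on $\Ksatp{\mu^+}$ is just a restriction. The two substantive conditions to establish are (a) the L\"owenheim-Skolem-Tarski property at $\mu^+$ and (b) closure under arbitrary $\K$-increasing chains of $\mu^+$-saturated models.

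For (a), given $M \in \Ksatp{\mu^+}$ and $A \subseteq |M|$ with $|A| \le \mu^+$, I would construct a continuous $\K$-increasing chain $\langle N_i : i < \mu^+ \rangle$ in $\K_{\mu^+}$ with $N_0 \supseteq A$. At each successor stage, using a book-keeping function enumerating all relevant $\mu$-sized $\K$-substructures, pick some $N' \le_\K N_i$ with $|N'| = \mu$ and arrange that every type in $\gS(N')$ is realized in $N_{i+1}$. Stability in $\mu$, which follows from $\mu$-superstability, gives $|\gS(N')| \le \mu$ and lets us keep $|N_{i+1}| = \mu^+$. Because $\cf(\mu^+) = \mu^+ > \mu$, every $\mu$-sized $\K$-substructure of the union $M' := \bigcup_{i < \mu^+} N_i$ already sits inside some $N_i$, and by the book-keeping its types are realized at some later stage; hence $M'$ is $\mu^+$-saturated, of size $\mu^+$, and contains $A$.

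For (b), let $\langle M_i : i < \delta \rangle$ be $\K$-increasing in $\Ksatp{\mu^+}$ and set $M := \bigcup_{i < \delta} M_i$. When $\cf(\delta) > \mu$, any $N \le_\K M$ with $|N| = \mu$ lies in some $M_i$, whose $\mu^+$-saturation then realizes every type over $N$, so $M$ is $\mu^+$-saturated. The heart of the matter, and the main obstacle, is the case $\cf(\delta) \le \mu$: a $\mu$-sized $N \le_\K M$ may sit cofinally across the chain, so no single $M_i$ suffices and the naive argument breaks down. Here I would invoke VanDieren's reduced-tower analysis from \cite{vandieren-sat-v1}: using $\mu^+$-superstability the chain can be presented via a reduced tower of $\mu^+$-sized limit models, and $\mu^+$-symmetry is precisely what forces reduced towers to be continuous at all limit ordinals (including those of cofinality $\le \mu$). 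Continuity of the tower transfers to $\mu^+$-saturation of the union $M$. The entire role of the $\mu^+$-symmetry hypothesis is to supply this low-cofinality continuity; without it the union-of-chains axiom for $\Ksatp{\mu^+}$ can genuinely fail, which is why it is the crux of the proof.
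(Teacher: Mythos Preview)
The paper does not prove this statement: it is recorded as a \emph{Fact}, citing \cite{vandieren-sat-v1} for the underlying union-of-saturated-models theorem and \cite[Theorem 6.6]{vv-symmetry-transfer-v2} for the present formulation. So there is no in-paper argument to compare against; what you have written is essentially a sketch of the proof in those cited sources, and it correctly isolates the two nontrivial points (the $\LS$-axiom and closure under short chains) as well as the role of $\mu^+$-symmetry via reduced towers.

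Two comments on the sketch itself. For (a), your bookkeeping over $\mu$-sized substructures is more delicate than needed and, as written, slightly underspecified: a model of size $\mu^+$ may have more than $\mu^+$ substructures of size $\mu$, so ``pick one $N'$ per stage and bookkeep'' needs a cofinal family argument. It is cleaner to use stability in $\mu^+$ (from $\mu^+$-superstability) directly: take $N_{i+1}$ universal over $N_i$; then any $\mu$-sized $N \le M'$ lies in some $N_i$, and every type over $N$ extends to one over $N_i$, realized in $N_{i+1}$. For (b), you have the right crux and the right citation, but the sentence ``the chain can be presented via a reduced tower \ldots\ continuity of the tower transfers to $\mu^+$-saturation of $M$'' compresses a genuinely nontrivial argument from \cite{vandieren-sat-v1}; in particular $\mu$-superstability (not only $\mu^+$-superstability) is used there to control $\mu$-splitting over the $\mu$-sized pieces that arise when checking saturation, so all three hypotheses are doing work in that step.
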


The following downward transfer of the symmetry property was the key result of \cite{vv-symmetry-transfer-v2}:

\begin{fact}[Theorem 1.1 in \cite{vv-symmetry-transfer-v2}]\label{sym-transfer}
  Let $\lambda > \mu \ge \LS (\K)$. Assume that $\K$ is superstable in every $\chi \in [\mu, \lambda]$. If $\K$ has $\lambda$-symmetry, then $\K$ has $\mu$-symmetry.
\end{fact}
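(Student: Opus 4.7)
The plan is to prove Fact \ref{sym-transfer} by contradiction, inflating a small witness to failure of $\mu$-symmetry into a witness to failure of $\lambda$-symmetry. The key resource provided by the hypothesis ``superstable in every $\chi \in [\mu, \lambda]$'' is that at every intermediate cardinal we have stability, locality/continuity, and no long splitting chains, which together guarantee the extension property for non-splitting along sufficiently universal chains.

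\emph{Phase 1 (pick a small witness).} Assume $\K$ fails to have $\mu$-symmetry. Fix $N, M_0, M \in \K_\mu$ and $a, b$ witnessing this per Definition \ref{sym defn}: $M$ is universal over $M_0$, $M_0$ is limit over $N$, $\tp(a/M_0)$ is non-algebraic and does not $\mu$-split over $N$, $\tp(b/M)$ is non-algebraic and does not $\mu$-split over $M_0$, and no $M^b \in \K_\mu$ limit over $M_0$ and containing $b$ satisfies that $\tp(a/M^b)$ does not $\mu$-split over $N$.

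\emph{Phase 2 (stretch to $\lambda$).} Build, by induction on $\chi \in [\mu, \lambda]$, an increasing continuous chain of configurations $(N_\chi, M_{0,\chi}, M_\chi)$ with each model of cardinality $\chi$, extending $(N, M_0, M)$, such that at each stage $M_{0,\chi}$ is limit over $N_\chi$, $M_\chi$ is universal over $M_{0,\chi}$, the non-algebraic type $\tp(a/M_{0,\chi})$ does not $\chi$-split over $N_\chi$, and the non-algebraic type $\tp(b/M_\chi)$ does not $\chi$-split over $M_{0,\chi}$. At successor stages use $\chi$-stability to find universal extensions, and use the extension property for non-splitting (available under $\chi$-superstability) to lift the non-splitting types to the larger base; at limit stages use clause \eqref{locality} of Definition \ref{ss assm} together with the fact that the chain has been built universal at successor steps, so continuity of non-splitting applies to preserve both non-splitting statements. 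Setting $(N^*, M_0^*, M^*) := (N_\lambda, M_{0,\lambda}, M_\lambda)$ yields a configuration in $\K_\lambda$ satisfying all four hypotheses of Definition \ref{sym defn} at $\lambda$ for the same pair $a, b$.

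\emph{Phase 3 (apply $\lambda$-symmetry and descend).} By the hypothesized $\lambda$-symmetry, there is $M^{*,b} \in \K_\lambda$, limit over $M_0^*$, containing $b$, with $\tp(a/M^{*,b})$ not $\lambda$-splitting over $N^*$. Using Löwenheim--Skolem inside $M^{*,b}$ and the structure of the limit chain witnessing that $M^{*,b}$ is limit over $M_0^*$, extract a $\preceq$-substructure $M^b \in \K_\mu$ with $M_0 \cup \{b\} \subseteq M^b \preceq M^{*,b}$ which is limit over $M_0$ in $\K_\mu$ (built as the union of a $\mu$-sized cofinal sub-tower of the limit chain for $M^{*,b}$ above $M_0$). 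Now monotonicity of non-splitting (restricting the test models $N_1, N_2$ to those of size $\mu$ inside $M^b$) gives that $\tp(a/M^b)$ does not $\mu$-split over $N$. This $M^b$ contradicts the choice made in Phase 1.

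\emph{Main obstacle.} The hardest and most delicate step is Phase 2: carrying both non-splitting facts (for $a$ over $N$ and for $b$ over $M_0$) simultaneously through the chain while preserving the universal/limit geometry. The extension property for non-splitting to a strictly larger cardinality typically requires an ambient universal extension at the correct cardinality, and one has to coordinate these requirements for both $a$ and $b$. The continuity at singular limit steps is also delicate and is exactly where each hypothesis ``$\K$ is superstable in $\chi$'' for $\chi$ a cofinal cardinal below $\lambda$ is used: without it, non-splitting could fail to be preserved at the limit and the constructed $\tp(a/M_{0,\lambda})$ or $\tp(b/M_\lambda)$ might split over its intended base, invalidating the application of $\lambda$-symmetry in Phase 3.
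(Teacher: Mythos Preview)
This statement is cited in the paper as a fact from \cite{vv-symmetry-transfer-v2} and is not proved here, so there is no proof in the present paper to compare against. Your proposal, however, has a genuine gap that should be addressed.

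The problem is in Phase 3 (and, for the same reason, in Phase 2). You obtain $M^{*,b}\in\K_\lambda$ with $\tp(a/M^{*,b})$ not $\lambda$-splitting over $N^*\in\K_\lambda$, and then claim that by ``monotonicity'' the restriction $\tp(a/M^b)$ does not $\mu$-split over $N$. This does not follow. Non-$\lambda$-splitting over $N^*$ is a statement about test models of size $\lambda$ and isomorphisms fixing all of $N^*$; non-$\mu$-splitting over $N$ is about $\mu$-sized test models and isomorphisms fixing only $N$. Given $f:N_1\cong_N N_2$ with $N_1,N_2\in\K_\mu$ inside $M^b$, there is no reason $f$ can be extended to an isomorphism fixing $N^*$ (elements of $N^*\cap N_1\setminus N$ may be moved), so the $\lambda$-non-splitting hypothesis is simply not applicable. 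The same obstruction appears in Phase 2 when you pass from $\chi$-splitting over $N_\chi$ to $\chi'$-splitting over $N_{\chi'}$ for $\chi'>\chi$: without tameness, non-splitting at one cardinal does not transfer to non-splitting at another over an enlarged base.

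The argument in \cite{vv-symmetry-transfer-v2} circumvents this by working with the \emph{non-uniform} variants of symmetry (see Definition \ref{many-syms} and the discussion before it): the conclusion there only asks that $\tp(a/M^b)$ not $\mu$-split over \emph{some} $N'$, not the original $N$. This weakening is exactly what makes the descent possible using the ``no long splitting chains'' clause of superstability at each level (one re-finds a splitting base after each step rather than trying to preserve the original one), and one then recovers full $\mu$-symmetry via the equivalence of the uniform and non-uniform variants under $\mu$-superstability. Alternatively, the tower characterization (Fact \ref{sym-charact}) is used: failure of $\mu$-symmetry yields a discontinuous reduced tower in $\K_\mu$, which is then stretched to a discontinuous reduced tower in $\K_\lambda$, contradicting $\lambda$-symmetry. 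Either way, the key missing idea in your sketch is that you cannot carry a fixed splitting base through changes of cardinality; you must allow the base to float and use local character to re-anchor it.
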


It will be convenient to use the following independence notion. A minor variation (where ``limit over'' is replaced by ``universal over'') appears in \cite[Definition 3.8]{ss-tame-toappear-v3}. 

\begin{definition}[Definition 3.1 in \cite{vv-symmetry-transfer-v2}]\label{mu-forking-def}
  Let $M_0 \lea M \lea N$ be models with $M_0 \in \K_\mu$. We say a type $p \in \gS (N)$ \emph{explicitly does not $\mu$-fork over $(M_0, M)$} if:

    \begin{enumerate}
      \item $M$ is limit over a model containing $M_0$.
      \item $p$ does not $\mu$-split over $M_0$.
    \end{enumerate}

    We say that \emph{$p$ does not $\mu$-fork over $M$} if there exists $M_0$ so that $p$ explicitly does not $\mu$-fork over $(M_0, M)$.
\end{definition}
\begin{remark}
  Assuming $\mu$-superstability, the relation ``$p$ does not $\mu$-fork over $M$'' is very close to defining an independence notion with the properties of forking in a first-order superstable theory (i.e.\ a good $\mu$-frame, see Section \ref{good-frame-sect}). In fact using tameness (or just, as we will show in Section \ref{good-frame-sect}, weak tameness) it can be used to do precisely that, see \cite{ss-tame-toappear-v3}.
\end{remark}

%% We will often use the following fact without mention\svnote{Check to see if that's true.}. Roughly speaking, it tells us that to build a $(\lambda, \theta)$-limit over a model containing $M_0 \in \K_{<\lambda}$, it is enough to build an increasing chain of models in $\K_{<\lambda}$ starting with $M_0$, each successor model universal over its predecessor. In other words, the models witnessing that the final model is limit need not be in $\K_\lambda$.

%% \begin{fact}[Proposition 4.1 in \cite{vv-symmetry-transfer-v2}]\label{limit model lemma}
%% Suppose that $\lambda \ge \LS (\K)$ is a cardinal. Assume that $\K$ is stable in $\lambda$ with no maximal models of cardinality $\lambda$. Let $\theta$ be a limit ordinal. Assume $\langle M_i \in\K_{\ge \LS (\K)}\mid i<\theta\rangle$ is a strictly increasing and continuous sequence of models so that for all $i < \theta$, $M_{i+1}$ is universal over $M_i$. If $M := \Union_{i<\theta}M_i$ has size $\lambda$, then $M$ is a $(\lambda,\theta)$-limit model over some model containing $M_0$.
%% \end{fact}

$\mu$-Forking has the following properties:

\begin{fact}\label{mu-forking-props}
  Let $\mu \ge \LS (\K)$. Assume that $\K$ is stable in $\mu$.
  \begin{enumerate}
    \item\label{mu-forking-1}\cite[I.4.10, I.4.12]{vandierennomax} Extension and uniqueness in $\mu$: Let $M \lea N$ be in $\K_\mu$, and $p \in \gS (M)$ explicitly does not $\mu$-fork over $(M_0, M)$, then there exists a unique $q \in \gS (N)$ extending $p$ so that $q$ explicitly does not $\mu$-fork over $(M_0, M)$. Moreover $q$ is algebraic if and only if $p$ is.
    \item\cite[Proposition 4.3]{vv-symmetry-transfer-v2} Continuity above $\mu$: Assume that $\K$ is $\mu$-superstable. 

  Suppose $\langle M_i\in\K_{\ge \mu}\mid i< \delta\rangle$ is an increasing sequence of models so that, for all $i < \delta$, $M_{i+1}$ is universal over $M_i$. Let $p\in\gaS(\Union_{i<\delta}M_i)$.
  If $p\restriction M_i$ does not $\mu$-split over $M_0$ for each $i<\delta$, then $p$ does not $\mu$-split over $M_0$.
    \item\label{mu-forking-3} Extension above $\mu$: Let $\lambda > \mu$. Let $M \lea N$ be in $K_{[\mu,\lambda]}$. Let $p \in \gS (M)$ be such that $p$ explicitly does not $\mu$-fork over $(M_0, M)$. If $\K$ is superstable in every $\chi \in [\mu, \lambda]$, then there exists $q \in \gS (N)$ extending $p$ and explicitly not $\mu$-forking over $(M_0, M)$. Moreover $q$ is algebraic if and only if $p$ is.
  \end{enumerate}
\end{fact}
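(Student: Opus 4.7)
Since each clause of Fact \ref{mu-forking-props} is attributed to an existing reference, my plan is to outline how each would be re-proved from the definitions and cited prerequisites in the paper, making clear what the real content is.

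For (\ref{mu-forking-1}), existence of the non-$\mu$-forking extension relies on the fact that by the definition of ``explicitly does not $\mu$-fork'', $M$ is limit over a model $M' \supseteq M_0$ and hence universal over $M'$. Thus for any $N \in \K_\mu$ with $M \lea N$, one can amalgamate $N$ with $M$ over $M'$ and reduce to the case where $N$ embeds into a universal extension of $M$ over $M'$; transporting $p$ back along this embedding yields a candidate extension $q$, and non-$\mu$-splitting of $p$ over $M_0$ guarantees that $q$ is well-defined (independent of the choice of embedding). Uniqueness follows because any two extensions that do not $\mu$-split over $M_0$ are pinned down by their restrictions to all $\mu$-sized intermediate models between $M_0$ and $N$, together with the non-splitting condition applied to isomorphisms fixing $M_0$. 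Preservation of algebraicity in both directions is then a standard consequence: if $p$ is non-algebraic but the extension $q$ becomes algebraic with realization $a \in N \setminus M$, then applying non-$\mu$-splitting over $M_0$ with an automorphism that moves $a$ within $N$ produces a contradiction.

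For the continuity statement in the second item, the strategy is to reduce to the $\mu$-sized locality clause Definition \ref{ss assm}(\ref{split assm})(\ref{locality}). Set $M_\delta := \Union_{i<\delta} M_i$ and suppose for contradiction that $p$ $\mu$-splits over $M_0$, as witnessed by some $N_1, N_2 \in \K_\mu$ with $M_0 \lea N_\ell \lea M_\delta$ and an isomorphism $f:N_1 \cong_{M_0} N_2$. Using Löwenheim--Skolem inside each $M_i$, build a cofinal $\mu$-sized subchain $\langle M_i' : i<\delta \rangle$ with $M_{i+1}'$ universal over $M_i'$, with $N_1 \cup N_2 \subseteq M_0'$, and with $p \rest M_i'$ still not $\mu$-splitting over $M_0'$ (which contains $M_0$). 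The $\mu$-sized locality property then yields that $p \rest \Union_i M_i'$ does not $\mu$-split over $M_0'$, contradicting the splitting witness since $N_1, N_2 \lea \Union_i M_i'$. (The appeal to $\mu$-superstability is precisely what gives us locality of $\mu$-splitting to invoke.)

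For (\ref{mu-forking-3}), I would proceed by induction on $|N|$. The base case $|N| = \mu$ is (\ref{mu-forking-1}). For $|N| = \chi$ with $\mu < \chi \le \lambda$, write $N = \Union_{j < \chi} N_j$ as a resolution with $|N_j| < \chi$ and each $N_j$ containing $M$, arranged so that $N_{j+1}$ is universal over $N_j$ (possible by stability in $|N_j|$). At successor stages, apply the induction hypothesis at cardinality $|N_{j+1}|$ — which requires superstability in that cardinal — to extend the current type while preserving explicit non-$\mu$-forking over $(M_0, M)$. At limit stages, use the continuity property from (2) to see that the type obtained by taking unions continues to not $\mu$-split over $M_0$, hence still explicitly does not $\mu$-fork over $(M_0, M)$. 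Algebraicity transfers along the induction by the corresponding clause of (1).

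The main obstacle is (\ref{mu-forking-3}): the induction must simultaneously preserve the \emph{same} witness $(M_0, M)$ across all stages while crossing cardinal boundaries, which is what forces the hypothesis of superstability in every $\chi \in [\mu, \lambda]$ (rather than just at $\mu$), since uniqueness and existence of the $\mu$-non-forking extension must be invoked at each intermediate cardinality to ensure the chain of extensions is coherent.
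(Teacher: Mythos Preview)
Your sketches for (\ref{mu-forking-1}) and for the continuity clause are reasonable outlines of the standard arguments.

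For (\ref{mu-forking-3}) there is a genuine gap. You write ``$N = \bigcup_{j<\chi} N_j$ \ldots arranged so that $N_{j+1}$ is universal over $N_j$ (possible by stability in $|N_j|$)''. Stability in $|N_j|$ tells you that universal extensions of $N_j$ \emph{exist} in $\K$, not that they exist \emph{inside the fixed model $N$}. An arbitrary $N\in\K_\chi$ need not admit a filtration by submodels each universal over the previous one. Without those universal steps you cannot invoke the continuity clause at limit stages, and you also lose the coherence of the successive extensions (you have no uniqueness above $\mu$ to fall back on). A secondary issue: your base case is $\|N\|=\mu$, so the case $\mu<\|M\|=\|N\|$ is never handled; in that situation you cannot even write $N$ as an increasing union of models of size $<\chi$ that all contain $M$.

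The paper's proof of (\ref{mu-forking-3}) sidesteps the problem by never resolving $N$. Instead it fixes a realization $a$ of $p$ and builds an \emph{external} increasing continuous chain $\langle N_i\in\K_{\|M\|+|i|} : i\le \|N\|\rangle$ with $N_0=M$, $N_{i+1}$ limit over $N_i$, and $\gtp(a/N_i)$ explicitly not $\mu$-forking over $(M_0,M)$. At successors one applies the induction hypothesis to obtain a nonforking extension over some limit model and then conjugates by an automorphism fixing $N_i$ so that $a$ realizes it; at limits one uses continuity. The case $\|N\|=\|M\|$ (possibly $>\mu$) is the base of the induction and is quoted from \cite[Proposition 4.4]{vv-symmetry-transfer-v2}. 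The top model $N_{\|N\|}$ is $\|N\|$-universal over $M$, so $N$ embeds into it over $M$ via some $f$, and $q:=f^{-1}(\gtp(a/f[N]))$ is the desired extension. The point is that the universal steps are built freely in the monster, not sought inside $N$.
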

\begin{proof}[Proof of (\ref{mu-forking-3})]
  By induction on $\|N\|$. Let $a$ realize $p$. If $\|N\| = \|M\|$ this is given by \cite[Proposition 4.4]{vv-symmetry-transfer-v2}. If $\|M\| < \|N\|$, build $\seq{N_i \in \K_{\|M\| + |i|} : i \le \|N\|}$ increasing continuous such that $N_0 = M$, $N_{i + 1}$ is limit over $N_i$, and $\gtp (a / N_i)$ explicitly does not $\mu$-fork over $(M_0, M)$. This is possible by the induction hypothesis and the continuity property of forking. Now $N_\lambda$ is $\|N\|$-universal over $N_0 = M$, so let $f: N \xrightarrow[M]{} N_\lambda$. Let $q := f^{-1} (\gtp (a / f[N]))$. It is easy to check that $q$ is as desired.
\end{proof}

\section{Symmetry from no order property}

In this section we show (assuming enough instances of superstability) that the negation of symmetry implies the order property, and hence contradicts stability. This is similar to \cite[Theorem 5.14]{bgkv-v2}, but  due to the intricate definition of the symmetry property for splitting, some technical details have to be handled. We first give an equivalent definition of symmetry. Recall that in \cite[Definition 3.3]{vv-symmetry-transfer-v2}, we gave three variations on the symmetry property: 

\begin{enumerate}
  \item The uniform $\mu$-symmetry, which is essentially Definition \ref{sym defn} (and in fact is formally equivalent to it).
  \item The non-uniform $\mu$-symmetry, which weakens the conclusion of uniform $\mu$-symmetry by ``changing'' the model $N$ that $\gtp (a / M^b)$ does not $\mu$-split over.
  \item The weak non-uniform $\mu$-symmetry which strengthens the hypotheses of non-uniform $\mu$-symmetry by requiring that $\gtp (b / M)$ does not $\mu$-fork, instead of $\mu$-split over $M_0$.
\end{enumerate}

There is a fourth possible variation, the weak uniform $\mu$-symmetry property, which strengthens the hypotheses of uniform $\mu$-symmetry similarly to the weak non-uniform $\mu$-symmetry, but leaves the conclusion unchanged.  For clarity we have underlined the differences between the weak and non-weak definitions.

\begin{definition}\label{many-syms}
  Let $\mu \ge \LS (\K)$.
  \begin{enumerate}
    \item $\K$ has \emph{uniform $\mu$-symmetry} if for any limit models $N, M_0, M$ in $\K_\mu$ where $M$ is limit over $M_0$ and $M_0$ is limit over $N$, if \underline{$\gtp (b / M)$ does not $\mu$-split over $M_0$}, $a \in |M|$, and $\gtp (a / M_0)$ explicitly does not $\mu$-fork over $(N, M_0)$, there exists $M_b \in \K_{\mu}$ containing $b$ and limit over $M_0$ so that $\gtp (a / M_b)$ explicitly does not $\mu$-fork over $(N, M_0)$.
    \item $\K$ has \emph{weak uniform $\mu$-symmetry} if for any limit models $N, M_0, M$ in $\K_\mu$ where $M$ is limit over $M_0$ and $M_0$ is limit over $N$, if \underline{$\gtp (b / M)$ does not $\mu$-fork over $M_0$}, $a \in |M|$, and $\gtp (a / M_0)$ explicitly does not $\mu$-fork over $(N, M_0)$, there exists $M_b \in \K_{\mu}$ containing $b$ and limit over $M_0$ so that $\gtp (a / M_b)$ explicitly does not $\mu$-fork over $(N, M_0)$.  See Figure \ref{fig:weak uniform sym}.
  \end{enumerate}
\end{definition}

As noted above, the difference is that
 in the non-weak version, $\gtp (b / M)$ does not $\mu$-split only over $M_0$, but in the weak version
we require that $\gtp (b / M)$ does not $\mu$-\emph{fork} over $M_0$.  Thus in the weak version, there exists $M_0'$ such that $M_0$ is limit over $M_0'$ and $\gtp (b / M)$ does not $\mu$-split over $M_0'$ (but we do not ask that $M_0'$ has any relationship to $N$).
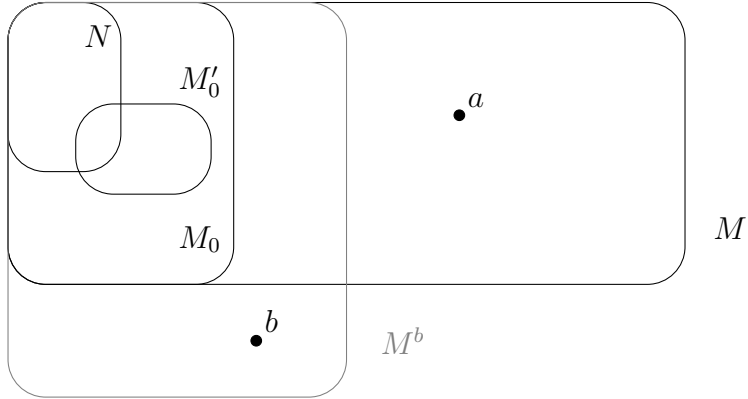
\begin{figure}[h]
\begin{tikzpicture}[rounded corners=5mm, scale=3,inner sep=.5mm]
\draw (0,1.25) rectangle (.5,.5);
\draw (.4,1.1) node {$N$};
\draw (0,0) rectangle (3,1.25);
\draw (0,1.25) rectangle (1,0);
\draw (.85,.2) node {$M_0$};
\draw (3.2, .25) node {$M$};
\draw (.85, .9) node {$M'_0$};
\draw (.3,.8) rectangle (.9,.4);
\draw[color=gray] (0,1.25) rectangle (1.5, -.5);
\node at (1.1,-.25)[circle, fill, draw, label=45:$b$] {};
\node at (2,.75)[circle, fill, draw, label=45:$a$] {};
\draw[color=gray] (1.75,-.25) node {$M^{b}$};
\end{tikzpicture}
\caption{A diagram of the models and elements in the definition of weak uniform $\mu$-symmetry.  We require that $\gtp (b / M)$ does not $\mu$-\emph{fork} over $M_0$ in the weak version, so there exists $M_0'$ such that $M_0$ is limit over $M_0'$ and $\gtp (b / M)$ does not $\mu$-split over $M_0'$} \label{fig:weak uniform sym}
\end{figure}

 We start by showing that assuming $\mu$-superstability this distinction is inessential, i.e.\ the two properties are equivalent. We will use the following characterization of symmetry:

\begin{fact}[Theorem 5 in \cite{vandieren-symmetry-v1}]\label{sym-charact}
  Assume that $\K$ is $\mu$-superstable. The following are equivalent:

  \begin{enumerate}
    \item\label{symmetry item} $\K$ has $\mu$-symmetry.
    \item\label{reduced are continuous} Reduced towers of size $\mu$ are continuous\footnote{See \cite{gvv-toappear-v1_2} for the definitions of these terms.}.
  \end{enumerate}
\end{fact}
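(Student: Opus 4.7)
The plan is to prove the two directions of the equivalence by translating between the local behavior of splitting (symmetry) and the structural behavior of towers (continuity of reduced towers). Recall that a tower $\mathcal{T} = \langle M_i, a_i, N_i : i < \alpha \rangle$ in $\K_\mu$ consists of a $\lea$-increasing chain of limit models $M_i$, elements $a_i \in M_{i+1} \setminus M_i$, and bases $N_i \lea M_i$ with $\tp(a_i / M_i)$ not $\mu$-splitting over $N_i$; it is \emph{reduced} if no proper extension shrinks its models from below. Both directions will proceed by contradiction.

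For $(\ref{symmetry item}) \Rightarrow (\ref{reduced are continuous})$, assume $\mu$-symmetry and suppose some reduced tower $\mathcal{T}$ of limit length $\delta < \mu^+$ is not continuous. The first step is to extract a witness of discontinuity: an element $b \in M_\delta \setminus \bigcup_{i<\delta} M_i$, where $M_\delta$ denotes the top model of the tower. After passing to a cofinal subsequence and using the no-long-splitting-chains clause of $\mu$-superstability, I may assume $\tp(b / M_i)$ does not $\mu$-split over $M_0$ for all $i < \delta$. Now fix $i$ with $0 < i < \delta$; the configuration $(N_i, M_i, M_{i+1}, a_i, b)$ satisfies the hypotheses of Definition \ref{sym defn}, so $\mu$-symmetry produces a limit model $M^b$ over $M_i$ containing $b$ with $\tp(a_i / M^b)$ not $\mu$-splitting over $N_i$. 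Gluing such choices into an amalgam yields a proper extension of $\mathcal{T}$ in which $b$ is absorbed into a stage strictly below $\delta$, contradicting reducedness.

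For $(\ref{reduced are continuous}) \Rightarrow (\ref{symmetry item})$, suppose $\mu$-symmetry fails via a configuration $(a, b, N, M_0, M)$. Build a tower of limit length $\delta \le \mu$ starting from $M_0$ with $a_0 := b$, $N_0 := M_0$, and continue by iteratively adjoining realizations of the $\mu$-forking extensions of $\tp(a / M_0)$ over increasingly large models, using Fact \ref{mu-forking-props}. Extend this tower to a reduced tower of the same length using the reduction operation recalled in \cite{gvv-toappear-v1_2}. The hypothesis of continuity forces $M_\delta = \bigcup_{i < \delta} M_i$ and propagates the non-splitting of $\tp(a / M_i)$ over $N$ through the union. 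Tracing through the definitions, $M_\delta$ contains $b$ (appearing as $a_0$), is limit over $M_0$, and $\tp(a / M_\delta)$ does not $\mu$-split over $N$, so $M_\delta$ witnesses symmetry of the original configuration, the desired contradiction.

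The main obstacle is the forward direction: the combinatorial task of converting one application of $\mu$-symmetry into a tower extension that actually violates reducedness. This is delicate because reducedness restricts how the embeddings underlying the extension must interact with all stages of the tower simultaneously; one must ensure that the limit model $M^b$ produced by symmetry can be glued into the tower coherently without destroying the non-splitting of later $a_j$ over later $N_j$. The extension and uniqueness clauses of Fact \ref{mu-forking-props}(\ref{mu-forking-1}) are the natural tools to align these witnesses stage by stage, and I would follow the amalgamation scheme of \cite{vandieren-symmetry-v1}, whose careful bookkeeping ultimately produces the required extension of $\mathcal{T}$.
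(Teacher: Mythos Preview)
The paper does not prove this statement; it is quoted as a Fact from \cite{vandieren-symmetry-v1} and used as a black box (most notably inside the proof of Lemma \ref{weak-unif-equiv}). So there is no argument in the present paper to compare your sketch against; I can only assess the sketch on its own terms.

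Your forward direction $(\ref{symmetry item}) \Rightarrow (\ref{reduced are continuous})$ follows the correct outline: pick $b$ witnessing discontinuity at a limit stage $\delta$, use the no-long-splitting-chains clause of $\mu$-superstability to stabilize $\tp(b/\bigcup_{i<\delta} M_i)$ over some $M_{i_0}$, apply symmetry level by level to obtain models $M^b$ containing $b$ over which the relevant $\tp(a_i/M^b)$ does not $\mu$-split over $N_i$, and assemble these into a tower extension in which $b$ appears strictly below stage $\delta$, contradicting reducedness. You correctly identify the gluing as the nontrivial part; this is exactly the bookkeeping carried out in \cite{vandieren-symmetry-v1}, and the hint in the proof of Lemma \ref{weak-unif-equiv} confirms that this is the shape of the argument.

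Your backward direction, however, has a real gap. You set $a_0 := b$, $N_0 := M_0$, populate later stages with realizations $a_i$ of non-forking extensions of $\tp(a/M_0)$, reduce, and then assert that $\tp(a/M_\delta)$ does not $\mu$-split over $N$. But nothing in this construction controls the type of the \emph{original} element $a$ over the tower models. The tower data record that $\tp(a_i/M_i')$ does not $\mu$-split over $N_i$; they say nothing about $\tp(a/M_i')$ for the fixed external $a$, and once you pass to a reduction the models $M_i'$ need bear no particular relationship to $a$ at all. Continuity only gives $M_\delta' = \bigcup_{i<\delta} M_i'$; it does not ``propagate'' non-splitting of a type that was never encoded in the tower. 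The line ``propagates the non-splitting of $\tp(a/M_i)$ over $N$ through the union'' is therefore unjustified as written. The argument in \cite{vandieren-symmetry-v1} instead runs this direction essentially as the contrapositive of your forward direction: from a failure of symmetry one manufactures a reduced tower that is discontinuous, with $a$ placed as one of the tower elements $a_i$ (so that its non-splitting is part of the tower data) and $b$ serving as the witness to discontinuity. You should reorganize the roles of $a$ and $b$ accordingly.
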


\begin{lemma}\label{weak-unif-equiv}
  Assume that $\K$ is $\mu$-superstable. The weak uniform $\mu$-symmetry is equivalent to uniform $\mu$-symmetry which is equivalent to $\mu$-symmetry.
\end{lemma}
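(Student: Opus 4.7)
The equivalence of uniform $\mu$-symmetry with $\mu$-symmetry (Definition~\ref{sym defn}) was noted after Definition \ref{many-syms} to be ``formally equivalent'' (see also \cite{vv-symmetry-transfer-v2}), so I will focus on the equivalence of weak uniform $\mu$-symmetry with uniform $\mu$-symmetry. The direction uniform $\Rightarrow$ weak uniform is immediate from monotonicity of non-splitting: if $\gtp(b/M)$ does not $\mu$-fork over $M_0$, there is $M_0^{*} \lea M_0$ with $M_0$ limit over a model containing $M_0^{*}$ and $\gtp(b/M)$ not $\mu$-splitting over $M_0^{*}$; by monotonicity (an isomorphism fixing $M_0$ also fixes $M_0^{*}$) we get non-splitting over $M_0$, so the hypotheses of uniform $\mu$-symmetry are met and its (identical) conclusion applies.

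For the converse, assume weak uniform $\mu$-symmetry and let $N, M_0, M, a, b$ satisfy the hypotheses of uniform $\mu$-symmetry. The plan is to upgrade ``$\gtp(b/M)$ does not $\mu$-split over $M_0$'' to ``$\gtp(b/M)$ does not $\mu$-fork over $M_0$'', which is precisely what is needed to apply weak uniform $\mu$-symmetry and obtain the same conclusion. Since $M_0$ is limit over $N$, write $M_0 = \Union_{i<\mu} N^i$ as a continuous chain of models in $\K_\mu$ with $N^0 = N$ and $N^{i+1}$ universal over $N^i$. Applying the ``no long splitting chains'' property (Definition \ref{ss assm}(\ref{no long splitting chain})) to $\gtp(b/M_0) \in \gaS(\Union_{i<\mu} N^i)$ produces $i < \mu$ such that $\gtp(b/M_0)$ does not $\mu$-split over $N^i$, and the tail $\langle N^j : i \lea j < \mu \rangle$ witnesses that $M_0$ is limit over $N^i$.

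The remaining step is to pass from non-splitting of the restriction $\gtp(b/M_0)$ over $N^i$ to non-splitting of the full type $\gtp(b/M)$ over $N^i$. For this I invoke the weak transitivity property of $\mu$-splitting (the remark following Definition \ref{ss assm}, citing \cite[Proposition 3.7]{ss-tame-toappear-v3}): in the situation $N^i \lea M_0 \lea M$ with $M_0$ universal over $N^i$, from the facts that $\gtp(b/M)$ does not $\mu$-split over $M_0$ and $\gtp(b/M_0)$ does not $\mu$-split over $N^i$, weak transitivity yields that $\gtp(b/M)$ does not $\mu$-split over $N^i$. Thus $\gtp(b/M)$ explicitly does not $\mu$-fork over $(N^i, M_0)$, and the hypothesis of weak uniform $\mu$-symmetry is satisfied. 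It produces $M_b \in \K_\mu$ containing $b$ and limit over $M_0$ with $\gtp(a/M_b)$ explicitly not $\mu$-forking over $(N, M_0)$ -- exactly the conclusion required for uniform $\mu$-symmetry.

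The only delicate point is this final transfer: having non-splitting of $\gtp(b/M)$ over $M_0$ does not on its own yield non-splitting over any strictly smaller base, and monotonicity runs in the wrong direction. It is precisely the combination with non-splitting of the restriction over $N^i$, together with the universality of $M_0$ over $N^i$, that lets weak transitivity propagate non-splitting over $N^i$ back up to the full type -- and so converts the weaker splitting hypothesis into the stronger forking hypothesis without altering the conclusion.
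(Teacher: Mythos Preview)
Your argument is correct, but it takes a genuinely different route from the paper's own proof. The paper does not attempt a direct reduction of the uniform hypotheses to the weak-uniform ones; instead it invokes Fact~\ref{sym-charact} (the equivalence of $\mu$-symmetry with continuity of reduced towers) and observes that in the proof of ``$\mu$-symmetry $\Rightarrow$ reduced towers are continuous'' the model $M$ over which $\gtp(b/M)$ is considered is always a union $\bigcup_{i<\delta} M_i$ with $M_{i+1}$ universal over $M_i$, so by superstability $\gtp(b/M)$ explicitly does not $\mu$-fork over $(M_i, M_{i+1})$ for some $i$. Hence weak uniform $\mu$-symmetry already suffices to run that proof, giving continuity of reduced towers and then $\mu$-symmetry by the converse direction of Fact~\ref{sym-charact}.

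Your approach is more elementary in that it avoids the detour through reduced towers entirely: you stay at the level of the symmetry definitions and use only the no-long-splitting-chains clause of superstability together with weak transitivity of $\mu$-splitting to upgrade ``does not $\mu$-split over $M_0$'' to ``does not $\mu$-fork over $M_0$''. This is cleaner and more self-contained. One small caution: depending on the exact formulation of weak transitivity in \cite[Proposition~3.7]{ss-tame-toappear-v3}, you may only conclude that $\gtp(b/M)$ does not $\mu$-split over $N^{i+1}$ rather than $N^i$; this is harmless, since $M_0$ is still limit over a model containing $N^{i+1}$, so the non-$\mu$-forking conclusion over $M_0$ is unaffected. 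The paper's approach, by contrast, buys brevity by leaning on machinery already in place from \cite{vandieren-symmetry-v1}.
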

\begin{proof}
  That uniform $\mu$-symmetry is equivalent to $\mu$-symmetry is easy (it appears as \cite[Proposition 3.5]{vv-symmetry-transfer-v2}). Clearly, uniform implies weak uniform. Now assuming weak uniform symmetry, the proof of $(\ref{symmetry item})\Rightarrow(\ref{reduced are continuous})$ of Fact \ref{sym-charact} still goes through. The point is that whenever we consider $\gtp (b / M)$ in the proof, $M = \bigcup_{i < \delta} M_i$ for some increasing continuous $\seq{M_i : i < \delta}$ with $M_{i + 1}$ universal over $M_i$ for all $i < \delta$, and we simply use that by superstability $\gtp (b / M)$ does not $\mu$-split over $M_i$ for some $i < \delta$. However we also have that $\gtp (b / M)$ explicitly does not $\mu$-fork over $(M_i, M_{i + 1})$.

  Therefore reduced towers are continuous, and hence by Fact \ref{sym-charact} $\K$ has $\mu$-symmetry.
\end{proof}

Next we recall the definition of the order property in AECs \cite[Definition 4.3]{sh394}.

\begin{definition}
  Let $\alpha$ and $\lambda$ be cardinals. A model $M \in \K$ has the \emph{$\alpha$-order property of length $\lambda$} if there exists $\seq{\ba_i : i < \lambda}$ inside $M$ with $\ell (\ba_i) = \alpha$ for all $i < \lambda$, such that for any $i_0 < j_0 < \lambda$ and $i_1 < j_1 < \lambda$, $\gtp (\ba_{i_0} \ba_{j_0} / \emptyset) \neq \gtp (\ba_{j_1} \ba_{i_1} / \emptyset)$. 

  We say that \emph{$\K$ has the $\alpha$-order property of length $\lambda$} if some $M \in \K$ has it. We say that \emph{$\K$ has the $\alpha$-order property} if it has the $\alpha$-order property of length $\lambda$ for all cardinals $\lambda$.
\end{definition}

We will use two important facts: the first says that it is enough to look at length up to the Hanf number. The second that the order property implies instability.

\begin{definition}\label{hanf-def}
  For $\lambda$ an infinite cardinal, $\hanf{\lambda} := \beth_{(2^{\lambda})^+}$.
\end{definition}

\begin{fact}[Claim 4.5.3 in \cite{sh394}]\label{shelah-hanf}
  Let $\alpha$ be a cardinal. If $\K$ has the $\alpha$-order property of length $\lambda$ for all $\lambda < \hanf{\alpha + \LS (K)}$, then $\K$ has the $\alpha$-order property.
\end{fact}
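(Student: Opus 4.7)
The plan is a Hanf number argument for PC classes via Shelah's presentation theorem. By the presentation theorem, $\K$ is the $L(\K)$-reduct of a first-order axiomatizable class of $L_1$-structures omitting a set $\Gamma$ of $L_1$-types, where $|L_1|, |\Gamma| \le \LS(\K)$. I would expand $L_1$ to a language $L_2$ of cardinality $\alpha + \LS(\K)$ by adding a unary predicate $P$, a binary relation $<_\ast$, function symbols $F_\xi$ for $\xi < \alpha$ (so that each $c \in P$ determines an $\alpha$-tuple $\bar{a}_c := (F_\xi(c))_{\xi < \alpha}$), and a pair of $(2\alpha)$-ary predicates $R^+, R^-$. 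The expanded theory $T_2 \supseteq T_1$ axiomatizes $<_\ast$ as a linear order on $P$, requires $R^+(\bar{a}_c, \bar{a}_d)$ for $c <_\ast d$ and $R^-(\bar{a}_c, \bar{a}_d)$ for $d <_\ast c$, and asserts $R^+ \cap R^- = \emptyset$.

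Given any $\K$-witness $M$ of the $\alpha$-order property of length $\lambda$ with sequence $\langle \bar{a}_i : i < \lambda \rangle$, I would first lift $M$ to an $L_1$-model via the presentation theorem, then expand to an $L_2$-model of $T_2 \cup \Gamma$ by taking $P$ to be a copy of $\lambda$, $<_\ast$ its standard order, $F_\xi(i) := \bar{a}_i(\xi)$, and $R^+, R^-$ forced by the axioms; the disjointness $R^+ \cap R^- = \emptyset$ is consistent with this interpretation precisely because the sequence witnesses the order property, so pairs in opposite $<_\ast$-orders carry distinct Galois types and in particular the predicates are forced to disagree on them. Conversely, any model of $T_2 \cup \Gamma$ in which $|P| \ge \lambda$ reducts to a $\K$-model whose distinguished sequence witnesses the $\alpha$-order property of length $\lambda$, because an automorphism of an ambient model swapping the coordinates of a $<_\ast$-increasing pair and a $<_\ast$-decreasing pair would have to preserve $R^+$ and $R^-$, contradicting their disjointness.

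This reduces the lemma to the classical Hanf number theorem for PC classes: $T_2 \cup \Gamma$ is a PC class in a language of size $\alpha + \LS(\K)$, and by hypothesis it has models with $|P|$ of every cardinality strictly below $\hanf{\alpha + \LS(\K)} = \beth_{(2^{\alpha + \LS(\K)})^+}$, which is exactly the Hanf number for consistency of such PC classes. Morley's method (via the Erd\H{o}s--Rado theorem) then produces models with $|P|$ of arbitrarily large cardinality, and reducting back to $L(\K)$ yields the $\alpha$-order property of all lengths in $\K$. The main obstacle is purely in the encoding step: Galois-type inequality is not \emph{a priori} first-order expressible, so one must verify carefully that the PC-class axioms for $R^+$ and $R^-$ faithfully force the non-equivalence of Galois types in reducts; once the encoding is correctly calibrated, the cardinality stretching is a black-box invocation of the Hanf number for PC classes.
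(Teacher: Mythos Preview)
The paper does not prove this statement; it is quoted as a black-box fact from \cite{sh394}. Your overall strategy---presentation theorem plus Hanf/Morley stretching for PC classes---is the standard one, but your ``conversely'' direction has a genuine gap. You argue that in a stretched $L_2$-model the reduct witnesses the order property because an automorphism swapping an increasing pair with a decreasing pair ``would have to preserve $R^+$ and $R^-$.'' It would not: Galois-type equality in $\K$ is witnessed by an automorphism of the monster \emph{as an $L(\K)$-structure}, and such a map has no obligation whatsoever to respect the auxiliary predicates $R^+, R^-$, which live only in $L_2$ and are invisible after reducting. Disjointness of $R^+$ and $R^-$ in the expansion therefore says nothing about Galois types in the reduct; you have correctly identified the obstacle (Galois-type inequality is not first-order expressible) but your encoding does not overcome it.

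The repair is not a cleverer syntactic encoding but to use what Morley's method actually delivers: an EM blueprint $\Phi$ extracted via iterated Erd\H{o}s--Rado from the given witnesses, so that for any linear order $I$ the generators of $EM(I, \Phi)$ are $L_1$-order-indiscernible and the $L_1$-type of any finite increasing tuple of generators is realized by an increasing tuple in some original witness. Then for $i < j$ in $I$, the Skolem hull of $\bar{a}_i, \bar{a}_j$ is $L_1$-isomorphic (hence $\K$-isomorphic, by the presentation theorem) to the Skolem hull of an increasing pair $\bar{a}_{i_0}, \bar{a}_{j_0}$ in an original witness; since $\K$-embeddings preserve Galois types over $\emptyset$, the Galois type of $(\bar{a}_i, \bar{a}_j)$ equals that of $(\bar{a}_{i_0}, \bar{a}_{j_0})$ and likewise $(\bar{a}_j, \bar{a}_i)$ matches $(\bar{a}_{j_0}, \bar{a}_{i_0})$, and these differ by the order property in the original witness. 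The key missing idea is that Galois-type inequality transfers through the $\K$-embeddings furnished by the EM construction, not through predicates added to the expansion.
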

\begin{fact}\label{stab-facts-op}
  If $\K$ has the $\alpha$-order property and $\mu \ge \LS (\K)$ is such that $\mu = \mu^{\alpha}$, then $\K$ is not stable in $\mu$.
\end{fact}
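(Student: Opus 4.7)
The plan is to derive a failure of $\mu$-stability by using the $\alpha$-order property to construct a model $N \in \K_\mu$ over which more than $\mu$ Galois types are realized. This is the standard Shelah-style argument via Ehrenfeucht--Mostowski models and Dedekind cuts.

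First I would invoke the $\alpha$-order property of unbounded length to extract a Galois-indiscernible sequence $\langle \bar{a}_t : t \in I \rangle$ of $\alpha$-tuples, indexed by any linear order $I$ of any prescribed size, such that for every increasing pair $t_0 <_I t_1$ the Galois type $\gtp(\bar{a}_{t_0} \bar{a}_{t_1} / \emptyset)$ is a fixed type $p_+$, for every decreasing pair a fixed type $p_-$, and $p_+ \neq p_-$. This uses the Morley-style extraction underlying Fact \ref{shelah-hanf}: start from an order-property witness of length $\hanf{|\alpha| + \LS(\K)}$, apply the Erd\H{o}s--Rado theorem to $2$-types of pairs to collapse each of the ``increasing pair'' and ``decreasing pair'' types to a single value, and then stretch to the desired index set using the monster.

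Next I would take $I$ to be a linear order of cardinality $\mu$ admitting at least $\mu^+$ Dedekind cuts (for example any dense linear order of size $\mu$), and let $N \in \K_\mu$ contain $\bigcup_{t \in I} \bar{a}_t$; this is possible since $|I| \cdot |\alpha| \le \mu \cdot \mu = \mu$ and $\LS(\K) \le \mu$. For each Dedekind cut $c$ of $I$, adjoin a new element at $c$ to obtain an extended indiscernible sequence and let $\bar{a}_c$ be the corresponding tuple. Because $p_+ \neq p_-$, the type $\gtp(\bar{a}_c / N)$ records, for each $t \in I$, whether $t$ lies below or above $c$, so distinct cuts give distinct Galois types over $N$. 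This produces $\mu^+$ distinct Galois types of $\alpha$-tuples over $N$, and the hypothesis $\mu = \mu^\alpha$ converts this into $|\gS(N)| > \mu$ in the usual way (an $\alpha$-tuple Galois type can be coded by an $\alpha$-sequence of $1$-types through a standard expansion argument, and $\mu^\alpha = \mu$).

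The main obstacle is the first step: extracting Galois-indiscernibles in the AEC framework, where we lack first-order compactness and must substitute the Hanf-number bound of Fact \ref{shelah-hanf}. Granting that bound, the rest is an elementary combinatorial argument on cuts of a linear order together with the cardinal arithmetic identity $\mu^\alpha = \mu$.
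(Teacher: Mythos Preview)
The paper's proof is just two citations: Claim~4.8.2 of \cite{sh394} gives $M\in\K_\mu$ with $|\gS^\alpha(M)|>\mu$, and Theorem~3.1 of \cite{longtypes-toappear-v2} converts this into instability in $\mu$. Your sketch is a reconstruction of what lies behind those citations, so the overall strategy is the right one. Two points need repair.

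The minor one: ``stretch to the desired index set using the monster'' is not available in an AEC, where there is no compactness. The stretching is done via Shelah's presentation theorem: from a sufficiently long order-property witness you extract an EM blueprint in an expansion of size $\LS(\K)$, and it is the blueprint, not the monster, that produces models containing indiscernibles indexed by an arbitrary linear order. Also, your example ``any dense linear order of size $\mu$'' is wrong ($\mathbb{R}$ is dense of size $2^{\aleph_0}$ and Dedekind complete, hence has only $2^{\aleph_0}$ cuts); what you need is the standard fact $\operatorname{ded}(\mu)>\mu$, witnessed e.g.\ by $2^{<\kappa}$ lexicographically ordered for the least $\kappa$ with $2^\kappa>\mu$.

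The genuine gap is the last step. Your claim that ``an $\alpha$-tuple Galois type can be coded by an $\alpha$-sequence of $1$-types'' is false for Galois types in a general AEC: $\gtp(\bar a/M)$ is not determined by $\langle\gtp(a_i/M):i<\alpha\rangle$, and you cannot simply add $a_{<i}$ to the base because $M\cup\{a_j:j<i\}$ is not a model (this would amount to assuming a form of type-shortness that does not hold in general). The argument behind \cite{longtypes-toappear-v2} goes through universality instead: stability in $\mu$ for $1$-types together with amalgamation yields, for every $M\in\K_\mu$, a universal extension $M^\ast\in\K_\mu$; every $\alpha$-type over $M$ is then realized by a tuple in $M^\ast$, so $|\gS^\alpha(M)|\le\|M^\ast\|^{\alpha}=\mu^{\alpha}=\mu$. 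With that correction your outline becomes a complete proof.
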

\begin{proof}
  By \cite[Claim 4.8.2]{sh394} (see \cite[Fact 5.13]{bgkv-v2} for a proof), there exists $M \in \K_\mu$ such that $|\gS^{\alpha} (M)| > \mu$. By \cite[Theorem 3.1]{longtypes-toappear-v2}, $\K$ is not stable in $\mu$.
\end{proof}

The following lemma appears in some more abstract form in \cite[Lemma 5.6]{bgkv-v2}.  The lemma says that if we assume that $p$ does not $\mu$-fork over $M$,
then in the definition of non-splitting (Definition \ref{def:splitting}) we can  replace the $N_\ell$ by arbitrary sequences in $N$ of length at most $\mu$. In the proof of Lemma \ref{sym-lem}, this will be used for sequences of length one.

\begin{lemma}\label{ns-lemma}
  Let $\mu \ge \LS (\K)$. Let $M \in \K_\mu$ and $N \in \K_{\ge \mu}$ be such that $M \lea N$. Assume that $\K$ is stable in $\mu$. If $p \in \gS (N)$ does not $\mu$-fork over $M$ (Definition \ref{mu-forking-def}), $a$ realizes $p$, and $\bb_1, \bb_2 \in \fct{\le \mu}{|N|}$ are such that $\gtp (\bb_1 / M) = \gtp (\bb_2 / M)$, then $\gtp (a \bb_1 / M) = \gtp (a \bb_2 / M)$.
\end{lemma}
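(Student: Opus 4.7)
My plan is to adapt the proof of Lemma~5.6 in \cite{bgkv-v2} by using the in-$\K_\mu$ extension-and-uniqueness of $\mu$-forking (Fact~\ref{mu-forking-props}(\ref{mu-forking-1})) in place of the abstract uniqueness invoked there.

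First, I fix $M_0 \in \K_\mu$ witnessing that $p$ does not $\mu$-fork over $M$: $M$ is limit over a model containing $M_0$, and $p$ does not $\mu$-split over $M_0$. Using L\"owenheim--Skolem (and $|\bar{b}_1|+|\bar{b}_2|+\|M\| \le \mu$), I choose $N' \in \K_\mu$ with $M \lea N' \lea N$ and $\bar{b}_1, \bar{b}_2 \in |N'|$. By $\gtp(\bar{b}_1/M) = \gtp(\bar{b}_2/M)$ and model-homogeneity of $\sea$, there is $g \in \Aut_M(\sea)$ with $g(\bar{b}_1) = \bar{b}_2$. Take $N'' \in \K_\mu$ with $|N'| \cup |g[N']| \subseteq |N''|$ and $N'' \lea \sea$; by coherence $N', g[N'] \lea N''$ and $M \lea N''$.

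Next, I observe that both $\gtp(a/N') = p \restriction N'$ and $\gtp(g(a)/g[N']) = g(p \restriction N')$ explicitly do not $\mu$-fork over $(M_0, M)$: for $\gtp(g(a)/g[N'])$ this uses that $g$ fixes $M_0$ pointwise, so conjugation by $g$ carries any $M_0$-isomorphism between $\mu$-sized submodels of $g[N']$ to an $M_0$-isomorphism between submodels of $N'$, preserving non-$\mu$-splitting. Fact~\ref{mu-forking-props}(\ref{mu-forking-1}) then produces a unique $q \in \gS(N'')$ extending $p \restriction M$ and explicitly not $\mu$-forking over $(M_0, M)$; applying its uniqueness clause separately at the intermediate models $N'$ and $g[N']$ forces
\[
q \restriction N' = \gtp(a/N'), \qquad q \restriction g[N'] = \gtp(g(a)/g[N']).
\]

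Realize $q$ by some $a^* \in \sea$. Because $M \cup \bar{b}_2 \subseteq |N'|$ and also $M \cup \bar{b}_2 \subseteq |g[N']|$ (the latter since $g$ fixes $M$ pointwise and $g(\bar{b}_1) = \bar{b}_2$), restricting the two identities above to the set $M\bar{b}_2$ yields
\[
\gtp(a/M\bar{b}_2) \;=\; \gtp(a^*/M\bar{b}_2) \;=\; \gtp(g(a)/M\bar{b}_2).
\]
This rearranges to $\gtp(a\bar{b}_2/M) = \gtp(g(a)\bar{b}_2/M)$; applying the automorphism $g^{-1} \in \Aut_M(\sea)$ (which fixes $M$ and sends $\bar{b}_2$ to $\bar{b}_1$) converts the right-hand side into $\gtp(a\bar{b}_1/M)$, giving $\gtp(a\bar{b}_1/M) = \gtp(a\bar{b}_2/M)$.

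The only mildly delicate step is verifying that $g(p \restriction N')$ does not $\mu$-split over $M_0$: this is routine invariance of non-splitting under an automorphism fixing the splitting base, but must be checked carefully because in the definition of $\mu$-splitting both comparison submodels must lie inside the domain of the type, and $g[N']$ is generally not a submodel of $N$ (which is precisely why one must pass to the common enlargement $N''$ rather than try to carry out the argument directly inside $N$).
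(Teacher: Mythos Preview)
Your proof is correct and follows the same underlying strategy as the paper's: witness $\gtp(\bar b_1/M)=\gtp(\bar b_2/M)$ by a map fixing $M$, then exploit non-$\mu$-splitting over $M_0$ to transfer information about $a$. The execution differs in two places. First, after reducing to a $\mu$-sized domain, the paper extends to a model $N'$ \emph{universal} over $N$ with $\gtp(a/N')$ still non-$\mu$-splitting over $M_0$, and realizes the type equality by an embedding $f:N\to_M N'$ with $f(\bar b_1)=\bar b_2$; you instead take a monster-model automorphism $g$ and pass to a common enlargement $N''\supseteq N'\cup g[N']$. Second, the paper applies the \emph{definition} of non-$\mu$-splitting directly to the $M_0$-isomorphism $f:N\cong f[N]$ inside $N'$ to obtain $\gtp(f(a)/f[N])=\gtp(a/f[N])$, whereas you invoke the packaged extension--uniqueness of Fact~\ref{mu-forking-props}(\ref{mu-forking-1}) as a black box over $N''$. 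Your route is the one sketched abstractly in \cite{bgkv-v2} and makes the dependence on uniqueness explicit; the paper's is a touch more elementary (it never needs the auxiliary realizer $a^*$ or the enlargement $N''$) at the cost of unpacking the non-splitting definition by hand.
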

%\begin{proof}
%  Pick $N_0 \in \K_\mu$ containing $\bb_1 \bb_2$ with $M \lea N_0 \lea N$. Then $p \rest N_0$ does not $\mu$-fork over $M$. Replacing $N$ by $N_0$ if necessary, we can assume without loss of generality that $N \in \K_\mu$. Now proceed as in the proof of \cite[Lemma 5.6]{bgkv-v2}, using the uniqueness and extension properties of $\mu$-forking (Fact \ref{mu-forking-props}).
%\end{proof}

\begin{proof}
Pick $N_0 \in \K_\mu$ containing $\bb_1 \bb_2$ with $M \lea N_0 \lea N$. Then $p \rest N_0$ does not $\mu$-fork over $M$.   Replacing $N$ by $N_0$ if necessary, we can assume without loss of generality that $N \in \K_\mu$. By definition of $\mu$-forking, there exists $M_0 \in \K_\mu$ such that $M_0 \lea M$ and $p$ does not $\mu$-split over $M_0$. By the extension and uniqueness property for $\mu$-splitting there exists $N'$ extending $N$ of cardinality $\mu$ so that $N'$ is universal over both $N$ and $M$, and $\gtp(a/N')$ does not $\mu$-split over $M_0$.  Since  $\gtp (\bb_1 / M) = \gtp (\bb_2 / M)$ and since $N'$ is universal over $N$, we can find $f:N\underset{M}\rightarrow N'$ so that $f(\bb_1)=\bb_2$.
Since $\gtp(a/N')$ does not $\mu$-split over $M_0$ we know $\gtp(f(a)/f(N))=\gtp(a/f(N))$.  By our choice of $f$ this implies that there exists $g\in\Aut_{f(N)}(\C)$ so that 
$g(f(a))=a$, $g\restriction M=\id_M$, and $g(\bb_2)=\bb_2$.  In other words $\gtp(f(a)\bb_2/M)=\gtp(a\bb_2/M)$.
Moreover $f^{-1}$ witnesses that $\gtp (a \bb_1 / M) = \gtp (f(a) \bb_2 / M)$, which we have seen is equal to $\gtp(a\bb_2/M)$.
\end{proof}

The next lemma shows that failure of symmetry implies the order property. The proof is similar to that of \cite[Theorem 5.14]{bgkv-v2}, the difference is that we use Lemma \ref{ns-lemma} and the equivalence between symmetry and weak uniform symmetry (Lemma \ref{weak-unif-equiv}).

\begin{lemma}\label{sym-lem}
  Let $\lambda > \mu \ge \LS (\K)$. Assume that $\K$ is superstable in every $\chi \in [\mu, \lambda)$. If $\K$ does \emph{not} have $\mu$-symmetry, then it has the $\mu$-order property of length $\lambda$.
\end{lemma}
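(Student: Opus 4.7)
My plan adapts the ``non-symmetry implies order property'' argument from \cite[Theorem 5.14]{bgkv-v2} to the splitting context. First I unpack a failure witness: by Lemma \ref{weak-unif-equiv}, failure of $\mu$-symmetry yields a failure of weak uniform $\mu$-symmetry, i.e., limit models $N \lea M_0 \lea M^*$ in $\K_\mu$ and elements $a \in |M^*|$, $b$ such that $\gtp(a/M_0)$ explicitly does not $\mu$-fork over $(N, M_0)$ and $\gtp(b/M^*)$ does not $\mu$-fork over $M_0$ (say via a witness $M_0' \lea M_0$), yet no limit model $M_b$ over $M_0$ containing $b$ yields $\gtp(a/M_b)$ explicitly not $\mu$-forking over $(N, M_0)$. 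Writing $p_a := \gtp(a/M_0)$ and $q := \gtp(b/M^*) \rest M_0$, both types admit unique nonforking extensions to any larger model of cardinality $< \lambda$ by Fact \ref{mu-forking-props}(\ref{mu-forking-1}) and (\ref{mu-forking-3}); this is where we use superstability in every $\chi \in [\mu, \lambda)$.

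I then inductively build an increasing continuous chain $\seq{M_i : i < \lambda}$ with the given $M_0$, $M_i \in \K_{\mu + |i|}$ limit over $M_0$, together with $a_i, b_i \in |M_{i+1}|$ and auxiliary models $P_i \lea M_{i+1}$, $P_i \in \K_\mu$ limit over $M_0$, $b_i \in |P_i|$. The choices ensure that $\gtp(a_i/M_i)$ is the unique nonforking extension of $p_a$ (hence does not $\mu$-split over $N$) and that $\gtp(b_i / M_i^+)$ is the unique nonforking extension of $q$, where $M_i^+$ is a fixed intermediate limit extension of $M_i \cup \{a_i\}$ inside $M_{i+1}$; both are possible by the extension and uniqueness properties of $\mu$-forking. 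Fixing an enumeration $\bar m$ of $M_0$ of length $\mu$, set $\ba_i := \bar m \,{}^\frown \seq{a_i, b_i}$, a tuple of length $\mu$.

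The heart of the proof is to show that for every $i_0 < j_0 < \lambda$ and $i_1 < j_1 < \lambda$, $\gtp(\ba_{i_0}\ba_{j_0}/\emptyset) \neq \gtp(\ba_{j_1}\ba_{i_1}/\emptyset)$. Since $\bar m$ codes $M_0$, it suffices to prove (A) $\gtp(a_i b_j / M_0) = \gtp(a b / M_0)$ for $i < j$, and (B) $\gtp(a_j b_i / M_0) \neq \gtp(a b / M_0)$ for $i < j$. For (A), since $\gtp(a_i/M_0) = p_a = \gtp(a/M_0)$, there is $g \in \Aut_{M_0}(\C)$ sending $a_i$ to $a$; then $\gtp(g(b_j) / N_1)$ and $\gtp(b/N_1)$ are both nonforking extensions of $q$ to a common $\mu$-sized model $N_1$ containing $M_0 \cup \{a\}$ (each fails to $\mu$-split over $M_0'$ by restriction of non-splitting from $g(M_j)$ and from $M^*$ respectively), hence coincide by Fact \ref{mu-forking-props}(\ref{mu-forking-1}); Lemma \ref{ns-lemma} streamlines this argument. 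For (B), suppose $\gtp(a_j b_i / M_0) = \gtp(a b / M_0)$ for some $i < j$ and pick $f \in \Aut_{M_0}(\C)$ realizing this equality. Then $f[|P_i|]$ is a limit model over $M_0$ of size $\mu$ containing $b = f(b_i)$. Since $\gtp(a_j/M_j)$ does not $\mu$-split over $N$ and $P_i \lea M_{i+1} \lea M_j$, the restriction $\gtp(a_j/P_i)$ does not $\mu$-split over $N$; because $f$ fixes $N$ (as $N \subseteq M_0$), $\gtp(a/f[P_i])$ does not $\mu$-split over $N$ either; combined with $M_0$ being limit over a model containing $N$, this shows that $\gtp(a/f[P_i])$ explicitly does not $\mu$-fork over $(N, M_0)$, contradicting the failure of symmetry.

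The main obstacle is the bookkeeping that makes claim (B) work: the auxiliary submodels $P_i$ must be placed inside the chain so that restriction of non-splitting applies, and the automorphism $f$ must fix $M_0$ (not merely $N$) in order for $f[|P_i|]$ to be a limit model \emph{over $M_0$} — which is why the tuples $\ba_i$ must code $M_0$, rather than just $N$.
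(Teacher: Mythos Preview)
Your overall strategy coincides with the paper's: reduce to failure of \emph{weak} uniform $\mu$-symmetry via Lemma \ref{weak-unif-equiv}, build a long increasing chain carrying nonforking realizations $a_i,b_i$ of $\gtp(a/M_0)$ and $\gtp(b/M_0)$, and separate increasing from decreasing index pairs by comparison with $\gtp(ab/M_0)$. The substantive difference is bookkeeping. The paper begins the chain with an $N_0$ that is limit over $M$ (your $M^*$) and already contains $b$; consequently both $a$ and $b$ lie in every $N_\alpha$, and Lemma \ref{ns-lemma} applies directly on both sides: for the inequality one applies it to $\gtp(a_\alpha/N_\alpha)$ with the pair $(b,b_\beta)$, and for the equality to $\gtp(b_\beta/N_\beta')$ with the pair $(a,a_\alpha)$. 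No auxiliary $P_i$ are needed.

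Your argument for (B) via the $P_i$'s is correct and pleasantly direct. Your sketch of (A), however, has a small gap: you assert that $\gtp(g(b_j)/N_1)$ and $\gtp(b/N_1)$ both fail to $\mu$-split over $M_0'$ ``by restriction'' from $g(M_j^+)$ and $M^*$ respectively, but that would force the single model $N_1$ to be a $\K$-substructure of \emph{both} $g(M_j^+)$ and $M^*$, which is not available in an AEC. The repair is routine---extend each type to a common $\mu$-sized supermodel and invoke uniqueness there (or, as the paper does, simply absorb $a$ and $b$ into the chain from the start)---but note that Lemma \ref{ns-lemma} does not literally ``streamline'' your version of (A), since in your construction $a\notin M_j^+$ and so the hypothesis $\bb_1,\bb_2\in |N|$ of that lemma is not met.
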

\begin{proof}
  By Lemma \ref{weak-unif-equiv}, $\K$ does not have weak uniform $\mu$-symmetry. We first pick witnesses to that fact. Pick limit models $N, M_0, M \in \K_\mu$ such that $M$ is limit over $M_0$ and $M_0$ is limit over $N$. Pick $b$ such that $\gtp (b / M)$ does not $\mu$-fork over $M_0$, $a \in |M|$, and $\gtp (a / M_0)$ explicitly does not $\mu$-fork over $(N, M_0)$, and there does \emph{not} exist $M_b \in \K_{\mu}$ containing $b$ and limit over $M_0$ so that $\gtp (a / M_b)$ explicitly does not $\mu$-fork over $(N, M_0)$. We will show that $\sea$ has the $\mu$-order property of length $\lambda$.

  We build increasing continuous $\seq{N_\alpha : \alpha < \lambda}$ and $\seq{a_\alpha, b_\alpha, N_\alpha' : \alpha < \lambda}$ by induction so that for all $\alpha < \lambda$:

  \begin{enumerate}
  \item $N_\alpha, N_\alpha' \in \K_{\mu + |\alpha|}$.
  \item $N_0$ is limit over $M$ and $b \in |N_0|$.
  \item $\gtp (a_\alpha / M_0) = \gtp (a / M_0)$ and $a_\alpha \in |N_\alpha'|$.
  \item $\gtp (b_\alpha / M) = \gtp (b / M)$ and $b_\alpha \in |N_{\alpha + 1}|$.
  \item $N_\alpha'$ is limit over $N_\alpha$ and $N_{\alpha + 1}$ is limit over $N_{\alpha}'$. 
  \item $\gtp (a_\alpha / N_\alpha)$ explicitly does not $\mu$-fork over $(N, M_0)$ and $\gtp (b_\alpha / N_\alpha')$ does not $\mu$-fork over $M_0$.
\end{enumerate}

\paragraph{\underline{This is possible}}

Let $N_0$ be any model in $\K_\mu$ containing $M$ and $a$ and limit over $M$. At $\alpha$ limits, let $N_\alpha := \bigcup_{\beta < \alpha} N_\beta$. Now assume inductively that $N_\beta$ has been defined for $\beta \le \alpha$, and $a_\beta$, $b_\beta$, $N_\beta'$ have been defined for $\beta < \alpha$. By extension for splitting, find $q \in \gS (N_\alpha)$ that explicitly does not $\mu$-fork over $(N, M_0)$ and extends $\gtp (a / M_0)$. Let $a_\alpha$ realize $q$ and pick $N_\alpha'$ limit over $N_\alpha$ containing $a_\alpha$. Now by extension again, find $q' \in \gS (N_\alpha')$ that does not $\mu$-fork over $M_0$ and extends $\gtp (b / M)$. Let $b_\alpha$ realize $q'$ and pick $N_{\alpha + 1}$ limit over $N_\alpha'$ containing $b_\alpha$.

\paragraph{\underline{This is enough}}

We show that for $\alpha, \beta < \lambda$:

\begin{enumerate}
  \item\label{cond-0} $\gtp (a_\alpha b / M_0) \neq \gtp (a b / M_0)$
  \item\label{cond-1} If $\beta < \alpha$, $\gtp (ab / M_0) \neq \gtp (a_\alpha b_\beta / M_0)$.
  \item\label{cond-2} If $\beta \ge \alpha$, $\gtp (ab / M_0) = \gtp (a_\alpha b_\beta / M_0)$.
\end{enumerate}

For (\ref{cond-0}), observe that $b \in |N_0| \subseteq |N_\alpha|$ and $\gtp (a_\alpha / N_\alpha)$ explicitly does not $\mu$-fork over $(N, M_0)$. Therefore by monotonicity $N_\alpha$ witnesses that there exists $N_b \in \K_\mu$ containing $b$ and limit over $M_0$ so that $\gtp (a_\alpha / M_b)$ explicitly does not $\mu$-fork over $(N, M_0)$. By failure of symmetry and invariance, we must have that $\gtp (a_\alpha b / M_0) \neq \gtp (a b / M_0)$.

For (\ref{cond-1}), suppose $\beta < \alpha$.  We know that $\gtp (a_\alpha / N_\alpha)$ does not $\mu$-fork over $M_0$. Since $\beta < \alpha$, $b, b_\beta \in |N_\alpha|$ and $\gtp (b / M) = \gtp (b_\beta / M)$, we must have by Lemma \ref{ns-lemma} that $\gtp (a_\alpha b / M_0) = \gtp (a_\alpha b_\beta / M_0)$\footnote{This is really where we use the equivalence between uniform $\mu$-symmetry and weak uniform $\mu$-symmetry: if we only had failure of uniform $\mu$-symmetry, then we would only know that $\gtp (b / M)$ does not \emph{$\mu$-split} over $M_0$, so would be unable to use Lemma \ref{ns-lemma}.}. 

Together with (\ref{cond-0}), this implies $\gtp (a b / M_0) \neq \gtp (a_\alpha b_\beta / M_0)$.

To see (\ref{cond-2}), suppose $\beta \geq \alpha$ and recall that (by (\ref{cond-0})) $\gtp (a b / M_0) = \gtp (a_\beta b / M_0)$. We also have that $\gtp (b_\beta / N_\beta')$ does not $\mu$-fork over $M_0$. Moreover $\gtp (a / M_0) = \gtp (a_\alpha / M_0)$, and $a, a_\alpha \in N_{\beta}'$. By Lemma \ref{ns-lemma} again, $\gtp (a b_\beta / M_0) = \gtp (a_\alpha b_\beta / M_0)$. This gives us that $\gtp (ab / M_0) = \gtp (a_\alpha b_\beta / M_0)$.

Now let $\bar{d}$ be an enumeration of $M_0$ and for $\alpha < \lambda$, let $\bc_\alpha := a_\alpha b_\alpha \bar{d}$. Then (\ref{cond-1}) and (\ref{cond-2}) together tell us that the sequence $\seq{\bc_\alpha \mid \alpha < \lambda}$ witnesses the $\mu$-order property of length $\lambda$.
\end{proof}

\begin{theorem}\label{sym-from-superstab}
  Let $\mu \ge \LS (\K)$. Then there exists $\lambda < \hanf{\mu}$ such that if $\K$ is superstable in every $\chi \in [\mu, \lambda)$, then $\K$ has $\mu$-symmetry.
\end{theorem}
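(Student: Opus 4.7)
The plan is to argue by contradiction using the three facts already assembled: Lemma \ref{sym-lem} (failure of $\mu$-symmetry forces the order property at a prescribed length), Fact \ref{shelah-hanf} (the Hanf-number threshold that upgrades the order property at all lengths below $\hanf{\mu}$ to the full order property), and Fact \ref{stab-facts-op} (the full order property kills stability at any $\chi$ satisfying $\chi^{\mu}=\chi$). The first observation is that $\mu$-symmetry is a property of $\K$ independent of the parameter $\lambda$: either $\K$ already has $\mu$-symmetry, in which case any $\lambda<\hanf{\mu}$ (say $\lambda=\mu^{+}$) makes the implication trivially true, or it does not. Assuming the latter, it is enough to locate some $\chi\in[\mu,\hanf{\mu})$ in which $\K$ fails to be superstable, for then $\lambda:=\chi^{+}<\hanf{\mu}$ makes the hypothesis ``superstable in every $\chi'\in[\mu,\lambda)$'' false, and the implication holds vacuously.

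To find this $\chi$, I would suppose for contradiction that $\K$ is superstable in every $\chi\in[\mu,\hanf{\mu})$. Then for each $\lambda'<\hanf{\mu}$ the hypotheses of Lemma \ref{sym-lem} are met (superstability on the entire interval $[\mu,\lambda')$, plus failure of $\mu$-symmetry), so $\K$ has the $\mu$-order property of length $\lambda'$. Since $\mu\ge\LS(\K)$ gives $\hanf{\mu+\LS(\K)}=\hanf{\mu}$, Fact \ref{shelah-hanf} then upgrades this to the full $\mu$-order property. Taking $\chi:=2^{\mu}$, one has $\chi\ge\mu\ge\LS(\K)$ and $\chi^{\mu}=2^{\mu\cdot\mu}=2^{\mu}=\chi$, so Fact \ref{stab-facts-op} yields that $\K$ is unstable in $2^{\mu}$. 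But $2^{\mu}<\beth_{(2^{\mu})^{+}}=\hanf{\mu}$, so $2^{\mu}\in[\mu,\hanf{\mu})$, and since superstability implies stability this contradicts the standing assumption. Hence the required $\chi$ exists, and the proof is complete.

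The main obstacle is really the mismatch between quantifiers: Lemma \ref{sym-lem} produces the $\mu$-order property only at the single length $\lambda'$ for which one has the corresponding superstability hypothesis, whereas Fact \ref{shelah-hanf} demands it at \emph{every} length below $\hanf{\mu}$. This mismatch forces the contradiction hypothesis to be superstability on the full interval $[\mu,\hanf{\mu})$ (not on any single cardinal), and in turn forces the resulting $\lambda$ to depend on $\K$ rather than being given by an explicit uniform formula. Aside from this bookkeeping, no new technical ingredient beyond Lemma \ref{sym-lem} and the two Shelah facts is needed.
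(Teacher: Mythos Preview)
Your proof is correct and uses exactly the same three ingredients as the paper (Lemma \ref{sym-lem}, Fact \ref{shelah-hanf}, Fact \ref{stab-facts-op}); the only difference is organizational. The paper argues more directly: it first handles the case where $\K$ is unstable in $2^{\mu}$ (setting $\lambda:=(2^{\mu})^{+}$), and otherwise takes the contrapositive of Fact \ref{shelah-hanf} to pick a specific $\lambda<\hanf{\mu}$ at which the $\mu$-order property of length $\lambda$ fails, then observes that this $\lambda$ works by the contrapositive of Lemma \ref{sym-lem}. Your version instead wraps everything in a global contradiction (assuming superstability on the whole interval $[\mu,\hanf{\mu})$), which is slightly less economical but equally valid.
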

\begin{proof}
  If $\K$ is unstable in $2^{\mu}$, then we can set $\lambda := \left(2^{\mu}\right)^+$ and get a vacuously true statement; so assume that $\K$ is stable in $2^{\mu}$. By Fact \ref{stab-facts-op}, $\K$ does not have the $\mu$-order property. By Fact \ref{shelah-hanf}, there exists $\lambda < \hanf{\mu}$ such that $\K$ does not have the $\mu$-order property of length $\lambda$. By Lemma \ref{sym-lem}, it is as desired.
\end{proof}

\begin{remark}
  How can one obtain many instances of superstability as in the hypothesis of Theorem \ref{sym-from-superstab}? One way is categoricity, see the next section. Another way is to start with one instance of superstability and transfer it up using tameness. Indeed by \cite[Proposition 10.10]{indep-aec-v4}, if $\K$ is $\mu$-superstable and $\mu$-tame, then it is superstable in every $\mu' \ge \mu$. Thus Theorem \ref{sym-from-superstab} generalizes \cite[Theorem 6.4]{vv-symmetry-transfer-v2} which obtained $\mu$-symmetry from $\mu$-superstability and $\mu$-tameness.
\end{remark}

\section{Symmetry and categoricity}

In this section, we apply Theorem \ref{sym-from-superstab} to categorical classes. We will use notation from Chapter 14 of \cite{baldwinbook09} (recall from Definition \ref{hanf-def} that $\hanf{\lambda} := \beth_{(2^{\lambda})^+}$):

\begin{definition}
  For $n < \omega$, define inductively:

  \begin{enumerate}
    \item $H_0 := \LS (\K)$.
    \item $H_{n + 1} := \hanf{H_n}$.
  \end{enumerate}
\end{definition}

Throughout this section, we assume (in addition to amalgamation) that $\K$ has no maximal models. This is not a big deal because we can always take a tail of the AEC to obtain it:

\begin{fact}[Proposition 10.13 in \cite{indep-aec-v4}]\label{nmm-fact}
  If $\K$ is an AEC with amalgamation categorical in a $\lambda \ge H_1$, then there exists $\chi < H_1$ such that $\K_{\ge \chi}$ has no maximal models.
\end{fact}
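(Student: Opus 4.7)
The plan is to combine Shelah's Presentation Theorem with an analysis of joint-embedding equivalence classes. Since $\lambda \ge H_1 = \hanf{\LS (\K)}$, the Presentation Theorem together with Morley's method yields an Ehrenfeucht--Mostowski blueprint $\Phi$ of size $\LS (\K)$ such that the $\K$-model $EM(I, \Phi)$ exists for every linear order $I$; by categoricity, the model $N$ of size $\lambda$ is isomorphic to $EM(\lambda, \Phi)$, so $N$ admits proper $\K$-extensions of arbitrarily large cardinality.

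Next, I would partition $\K$ into joint-embedding classes: define $M_1 \sim M_2$ iff there is $P \in \K$ into which $M_1$ and $M_2$ both $\K$-embed. Amalgamation makes this transitive. Each equivalence class $C$ is closed under $\K$-substructures (of size $\ge \LS (\K)$) and under $\K$-chain unions, so $C$ is itself an AEC with $\LS (C) = \LS (\K)$. Since each class contains a model of size $\LS (\K)$ (by L\"owenheim--Skolem), the total number of classes is at most $2^{\LS (\K)}$.

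The heart of the argument is to bound the sizes of each class $C \ne [N]$ strictly below $H_1$. If $C$ contained a model of size $\ge H_1$, then applying the Presentation Theorem inside the AEC $C$ and using Morley's method would give a model of size $\lambda$ in $C$; but by categoricity this model would be isomorphic to $N$, contradicting $C \ne [N]$. Moreover, if $C$ had models of cofinally many sizes below $H_1$, closure under chain unions would let us build a $\K$-chain of length $\cf (H_1) = (2^{\LS (\K)})^+$ inside $C$ whose union has size $H_1$, again a contradiction. So each $C \ne [N]$ has an upper size bound $\chi_C < H_1$. Since the number of classes is at most $2^{\LS (\K)} < \cf (H_1)$, the supremum $\chi^* := \sup_{C \ne [N]} \chi_C$ is still strictly less than $H_1$. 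Taking $\chi := (\chi^*)^+$ (which is still below $H_1$, as $H_1$ is a limit cardinal) gives $\K_{\ge \chi} \subseteq [N]$.

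Finally, for any $M \in \K_{\ge \chi}$, joint embedding with $N$ yields $P \in \K$ into which both $M$ and $N$ $\K$-embed. Using the blueprint $\Phi$ to pick a $\K$-extension $N'$ of $N$ with $|N'| > |P|$, and amalgamating $P$ with $N'$ over (the image of) $N$, produces $Q$ with $M \lea Q$ and $|Q| \ge |N'| > |P| \ge |M|$, witnessing that $M$ is not maximal. The main obstacle is the cardinal-arithmetic bookkeeping that keeps the supremum of the class-bounds strictly below $H_1$; this is precisely where the choice $H_1 = \beth_{(2^{\LS (\K)})^+}$, with $\cf (H_1) > 2^{\LS (\K)}$, is essential.
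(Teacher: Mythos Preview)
The paper does not give its own proof of this statement; it is quoted as a Fact with a citation to Proposition~10.13 of \cite{indep-aec-v4}. Your proposal is essentially the standard argument (and matches the proof in the cited reference): decompose $\K$ into joint-embedding equivalence classes using amalgamation, bound the number of classes by $2^{\LS(\K)}$ via L\"owenheim--Skolem, use the Presentation Theorem plus the Hanf-number argument to bound the spectrum of each class $C \ne [N]$ strictly below $H_1$, and then exploit $\cf(H_1) = (2^{\LS(\K)})^+ > 2^{\LS(\K)}$ to take a uniform bound $\chi$. The final step---extending any $M \in \K_{\ge \chi}$ properly using the EM blueprint for $N$ and amalgamation over the embedded copy of $N$---is also correct.

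One small point to make explicit: in the step ``if $C$ had models of cofinally many sizes below $H_1$, build a chain of length $\cf(H_1)$ in $C$'', you are using more than closure under unions. At successor stages you need joint embedding inside $C$ (which holds by definition of $\sim$) together with downward L\"owenheim--Skolem to keep the amalgam strictly below $H_1$; at limit stages $\delta < \cf(H_1)$ you need that the union still has size below $H_1$, which holds precisely because $\delta < \cf(H_1)$. This is routine but is the one place where a reader might want a sentence more.
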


\begin{hypothesis}
  $\K$ is an AEC with amalgamation and no maximal models.
\end{hypothesis}

The following powerful fact has its roots in \cite[Theorem 2.2.1]{shvi635}, where it was proven assuming the generalized continuum hypothesis instead of amalgamation. This is the main tool to obtain superstability from categoricity. Its proof relies on Ehrenfeucht-Mostowski models, which is why we assumed no maximal models.

\begin{fact}[The Shelah-Villaveces theorem, Theorem 6.3 in \cite{gv-superstability-v2}]\label{shvi}
  Let $\lambda > \mu \ge \LS (\K)$. If $\K$ is categorical in $\lambda$, then $\K$ is $\mu$-superstable.
\end{fact}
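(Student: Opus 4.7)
The plan is to verify the four clauses of $\mu$-superstability from Definition~\ref{ss assm}. Clauses (1) and (2) are routine: clause (1) is the hypothesis $\mu \ge \LS(\K)$; for clause (2), the existence of a model in $\K_\mu$ comes from Löwenheim-Skolem applied to the categorical model of size $\lambda$, joint embedding on $\K_\mu$ is obtained by embedding any two $\mu$-sized models into the unique model of size $\lambda$ via amalgamation, and no maximal models is a standing hypothesis. For clause (3), stability in $\mu$, I would invoke the classical Shelah argument \cite[Claim 1.7(b)]{sh394}: under amalgamation, no maximal models, and categoricity in $\lambda > \mu$, $\K$ must be stable in every cardinal in $[\LS(\K),\lambda)$, since otherwise one can construct two non-isomorphic models of size $\lambda$ (one realizing, one omitting a distinguished type), contradicting categoricity.

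The hard part is clause (4), the absence of long splitting chains (continuity is then free by the weak transitivity remark after Definition~\ref{ss assm}). Suppose for contradiction there is an increasing chain $\seq{M_i : i < \mu^+}$ in $\K_\mu$ with $M_{i+1}$ universal over $M_i$, and a type $p \in \gS(\bigcup_{i < \mu^+} M_i)$ that $\mu$-splits over every $M_i$. For each $i$, I would fix witnesses $N_i^1, N_i^2 \in \K_\mu$ with $M_i \lea N_i^\ell \lea M_{i+1}$, an isomorphism $f_i : N_i^1 \cong_{M_i} N_i^2$, and an element $c_i$ realizing $p \rest N_i^2$ but not $f_i(p \rest N_i^1)$.

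The strategy from here is to absorb all of this witnessing data $\seq{M_i, N_i^1, N_i^2, f_i, c_i : i < \mu^+}$ into a Skolemized expansion and extract, via Shelah's presentation theorem, an Ehrenfeucht-Mostowski blueprint of Löwenheim-Skolem size $\LS(\K)$. Stretching the blueprint to a linear order of cardinality $\lambda$ then produces a model in $\K_\lambda$ along with a $\lambda$-long sequence of witnessed splittings of a common type over a small submodel. Counting the induced Galois types gives strictly more than $\lambda$ distinct types over a model of size $\le\mu$, contradicting stability of $\K$ in $\lambda$ (which is clause (3) applied at $\lambda$, not $\mu$).

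The main obstacle I anticipate is the Ehrenfeucht-Mostowski stage: one must check that the stretched blueprint reproduces the splitting at the level of \emph{Galois} types, not merely syntactically, since the original witnesses $f_i$ and $c_i$ are Galois-theoretic. The fix is to combine the syntactic indiscernibility given by the presentation theorem with amalgamation (and the invariance of $\gtp$ under $\Aut(\sea)$) to transport each inequality $f_i(p\rest N_i^1)\ne p\rest N_i^2$ from the $\mu^+$-chain into distinct Galois types in the stretched model. With that transfer in hand, the counting argument closes the proof; this is essentially the strategy of \cite{shvi635}, replacing their GCH hypothesis with amalgamation as in \cite{gv-superstability-v2}.
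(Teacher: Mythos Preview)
The paper does not prove this statement; it is recorded as a Fact with a citation to \cite{gv-superstability-v2}, so there is no in-paper argument to compare against. That said, your sketch has two concrete gaps.

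First, your negation of clause~(4) is too strong. The definition quantifies over \emph{every} limit $\alpha < \mu^+$; its failure hands you a chain of some fixed length $\delta < \mu^+$ together with a type $p \in \gS\bigl(\bigcup_{i<\delta} M_i\bigr)$ that $\mu$-splits over each $M_i$. You instead posit a chain of length $\mu^+$. There is no cheap way to promote a $\delta$-chain with its bad type to a $\mu^+$-chain: you can extend the chain by universal extensions, but an arbitrary extension of $p$ need not continue to split over the new $M_i$'s.

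Second, your endgame appeals to ``stability of $\K$ in $\lambda$'', but categoricity in $\lambda$ does \emph{not} give stability in $\lambda$; the paper itself points to the Hart--Shelah example for precisely this failure. Even reading you charitably as meaning stability in $\mu$ (since $>\lambda$ types over a $\mu$-sized model certainly gives $>\mu$), stretching an EM blueprint to length $\lambda$ naturally yields at most $\lambda$ witnesses, and you have not explained where a strict inequality $>\lambda$ would come from. The actual Shelah--Villaveces argument does not count types at $\lambda$ at all: one uses categoricity to place the chain $\seq{M_i : i<\delta}$ and a realization of $p$ inside an EM model $EM_\tau(I,\Phi)$ over a well-ordered $I$, and then uses the order-indiscernibility of the generators to show directly that any type realized in such a model over an EM-submodel fails to $\mu$-split over a small piece of it. That immediately contradicts the assumption that $p$ splits over every $M_i$, with no detour through instability at $\lambda$.
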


Combining this fact with Theorem \ref{sym-from-superstab}, we obtain symmetry from categoricity in a high-enough cardinal:

\begin{theorem}\label{sym-categ}
  Let $\mu \ge \LS (\K)$. Assume that $\K$ is categorical in a $\lambda \ge \hanf{\mu}$. Then $\K$ is $\mu$-superstable and has $\mu$-symmetry. 
\end{theorem}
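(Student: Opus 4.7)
The statement is essentially a combination of the Shelah--Villaveces theorem (Fact \ref{shvi}) and Theorem \ref{sym-from-superstab}; the plan is to feed the first into the second. First I would invoke Theorem \ref{sym-from-superstab} to obtain some $\lambda_0 < \hanf{\mu}$ such that superstability in every $\chi \in [\mu, \lambda_0)$ suffices to conclude $\mu$-symmetry. Since by hypothesis $\lambda \ge \hanf{\mu} > \lambda_0$, every $\chi \in [\mu, \lambda_0)$ satisfies $\LS(\K) \le \chi < \lambda$, so Fact \ref{shvi} (applied with the categoricity cardinal $\lambda$ and the base cardinal $\chi$) gives that $\K$ is $\chi$-superstable; this uses the standing hypothesis that $\K$ has amalgamation and no maximal models. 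Applying Theorem \ref{sym-from-superstab} to this collection of superstability instances yields $\mu$-symmetry, and taking $\chi = \mu$ in the preceding application of Fact \ref{shvi} yields $\mu$-superstability.

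\textbf{Where the work lies.} There is no genuine obstacle here: the technical core of the argument is encapsulated in Theorem \ref{sym-from-superstab}, whose proof (via Lemma \ref{sym-lem}) used the equivalence between uniform and weak uniform $\mu$-symmetry (Lemma \ref{weak-unif-equiv}), the extension property for $\mu$-forking across model sizes (Fact \ref{mu-forking-props}(\ref{mu-forking-3})), and the Hanf-number bound for the order property (Fact \ref{shelah-hanf}) together with its incompatibility with stability (Fact \ref{stab-facts-op}). The only thing to check in the present proof is that the bound $\hanf{\mu}$ in the statement is large enough to accommodate the bound $\lambda_0 < \hanf{\mu}$ extracted from Theorem \ref{sym-from-superstab}, which is immediate since $\lambda \ge \hanf{\mu}$ strictly exceeds $\lambda_0$.
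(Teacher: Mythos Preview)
Your proposal is correct and follows essentially the same approach as the paper: apply the Shelah--Villaveces theorem (Fact \ref{shvi}) to obtain superstability in every $\chi \in [\mu, \lambda)$, then feed this into Theorem \ref{sym-from-superstab} to conclude $\mu$-symmetry. The paper's proof is marginally terser in that it does not bother to name the threshold $\lambda_0 < \hanf{\mu}$ explicitly, but the logic is identical.
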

\begin{proof}
  By the Shelah-Villaveces theorem (Fact \ref{shvi}), $\K$ is $\mu'$-superstable in every $\mu' \in [\mu, \lambda)$. By Theorem \ref{sym-from-superstab}, $\K$ has $\mu$-symmetry.
\end{proof}

Combining Theorem \ref{sym-categ} with consequences of symmetry, we deduce:

\begin{corollary}
  Let $\mu \ge \LS (\K)$. Assume that $\K$ is categorical in a $\lambda \ge \hanf{\mu}$. For any $M_0, M_1, M_2 \in \K_\mu$, if $M_1$ and $M_2$ are both limit over $M_0$, then $M_1 \cong_{M_0} M_2$.
\end{corollary}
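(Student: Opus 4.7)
The corollary is essentially an immediate consequence of Theorem \ref{sym-categ} combined with Fact \ref{sym-uq-lim}, so the plan is to chain these two results.

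First, I would invoke Theorem \ref{sym-categ} with the given $\mu$ and $\lambda$. Since $\lambda \ge \hanf{\mu}$ and $\K$ is categorical in $\lambda$ (and $\K$ is an AEC with amalgamation and no maximal models by our standing hypotheses in this section), Theorem \ref{sym-categ} tells us that $\K$ is $\mu$-superstable and has $\mu$-symmetry.

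Second, I would apply Fact \ref{sym-uq-lim} (the first author's result that symmetry plus superstability implies uniqueness of limit models). This fact takes exactly the two inputs provided by the previous step ($\mu$-superstability and $\mu$-symmetry) and yields precisely the desired conclusion: for any $M_0, M_1, M_2 \in \K_\mu$ with $M_1$ and $M_2$ both limit over $M_0$, we have $M_1 \cong_{M_0} M_2$.

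There is no real obstacle here — the conceptual work has already been done in proving Theorem \ref{sym-categ}, which in turn relied on the order-property argument of Lemma \ref{sym-lem} (the genuinely new content) together with the Shelah--Villaveces theorem. The present corollary is simply the advertised ``in particular'' consequence: once symmetry is available at $\mu$, uniqueness of limit models at $\mu$ follows by the established structural theory of towers.
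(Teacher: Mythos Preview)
Your proposal is correct and matches the paper's proof exactly: the paper simply writes ``By Theorem \ref{sym-categ} and Fact \ref{sym-uq-lim}.''
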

\begin{proof}
  By Theorem \ref{sym-categ} and Fact \ref{sym-uq-lim}.
\end{proof}

\begin{corollary}\label{cor-mu-sat}
  Let $\mu > \LS (\K)$. Assume that $\K$ is categorical in a $\lambda \ge \sup_{\mu_0 < \mu} \hanf{\mu_0^+}$. Then the model of size $\lambda$ is $\mu$-saturated. 
\end{corollary}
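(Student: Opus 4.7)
My plan is to reduce $\mu$-saturation of the $\lambda$-sized model to $\mu_0^+$-saturation for each $\mu_0 \in [\LS(\K), \mu)$, and then invoke Fact \ref{union-sat-monica} to produce a $\mu_0^+$-saturated model of size $\lambda$ which, by categoricity, must be \emph{the} model of size $\lambda$.

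Since $\mu > \LS(\K)$, every model of cardinality less than $\mu$ has size $\mu_0$ for some $\mu_0 \in [\LS(\K), \mu)$, so it suffices to show the (unique) model $M_\lambda \in \K_\lambda$ is $\mu_0^+$-saturated for each such $\mu_0$. Fix one. Since $\mu_0^+ \leq \mu$, the hypothesis yields $\lambda \geq \hanf{\mu_0^+} \geq \hanf{\mu_0}$. Applying Theorem \ref{sym-categ} at both $\mu_0$ and $\mu_0^+$, we obtain that $\K$ is $\mu_0$-superstable, $\mu_0^+$-superstable, and has $\mu_0^+$-symmetry.

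With these three inputs, Fact \ref{union-sat-monica} (with $\mu_0$ in place of $\mu$) tells us that $\Ksatp{\mu_0^+}$ is an AEC with $\LS(\Ksatp{\mu_0^+}) = \mu_0^+$. Using no maximal models in $\K$ together with the closure of $\Ksatp{\mu_0^+}$ under unions of chains, one produces a model $N \in \Ksatp{\mu_0^+}$ of cardinality $\lambda$. Since $N \in \K_\lambda$, categoricity forces $N \cong M_\lambda$, so $M_\lambda$ is $\mu_0^+$-saturated. As $\mu_0$ was arbitrary, this gives $\mu$-saturation.

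The one step meriting a bit more care is the construction of a $\mu_0^+$-saturated model of size $\lambda$ in $\K$. Once one knows $\Ksatp{\mu_0^+}$ is an AEC, this is routine: start from a saturated $N_0 \in \K_{\mu_0^+}$ (which exists by stability in $\mu_0$) and build a continuous increasing $\Ksatp{\mu_0^+}$-chain $\seq{N_i : i < \lambda}$ with $\|N_i\|$ tending to $\lambda$, using no maximal models in $\K$ to pass to proper extensions and the AEC structure of $\Ksatp{\mu_0^+}$ to keep successive members $\mu_0^+$-saturated. The union has cardinality $\lambda$ and lies in $\Ksatp{\mu_0^+}$, completing the argument.
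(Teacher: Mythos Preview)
Your proof is correct and follows essentially the same route as the paper's: reduce to $\mu_0^+$-saturation for each $\mu_0 \in [\LS(\K),\mu)$, apply Theorem~\ref{sym-categ} to obtain $\mu_0$- and $\mu_0^+$-superstability together with $\mu_0^+$-symmetry, invoke Fact~\ref{union-sat-monica} to see that $\Ksatp{\mu_0^+}$ is an AEC, and then produce a member of size $\lambda$, which by categoricity must be the model in question. The only cosmetic point is that in your chain construction the phrase ``the AEC structure of $\Ksatp{\mu_0^+}$'' handles the limit stages (closure under chains), but not the successor stages; there you really use stability below $\lambda$ (from Fact~\ref{shvi}) to build a $\mu_0^+$-saturated proper extension, not the AEC axioms per se. This is a harmless imprecision and the paper's own proof is at least as terse (it simply asserts that $\Ksatp{\mu_0^+}$ has arbitrarily large models).
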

\begin{proof}
  We check that the model of size $\lambda$ is $\mu_0^+$-saturated for every $\mu_0 \in [\LS (\K), \mu)$. Fix such a $\mu_0$. By Theorem \ref{sym-categ}, $\K$ is $\mu_0$-superstable, $\mu_0^+$-superstable, and has $\mu_0^+$-symmetry. By Fact \ref{union-sat-monica} $\Ksatp{\mu_0^+}$, the class of $\mu_0^+$-saturated models in $\K_{\ge \mu_0^+}$, is an AEC with Löwenheim-Skolem number $\mu_0^+$. Since it has arbitrarily large models, it must have a model of size $\lambda$, which is unique by categoricity.
\end{proof}

We immediately obtain a downward categoricity transfer. We will use:

\begin{fact}[The AEC omitting type theorem, II.1.10 in \cite{sh394}]\label{omitting-type}
  Let $\lambda > \chi \ge \LS (\K)$. If $\K$ is categorical in $\lambda$ and the model of size $\lambda$ is $\chi^+$-saturated, then every model in $\K_{\ge \hanf{\chi}}$ is $\chi^+$-saturated.
\end{fact}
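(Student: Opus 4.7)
The plan is to derive a contradiction with categoricity in $\lambda$ by building a model of size $\lambda$ in $\K$ that fails to be $\chi^+$-saturated, contradicting the hypothesis that the unique model of size $\lambda$ is $\chi^+$-saturated. The main engine is the Shelah Presentation Theorem together with a Morley-style omitting types argument via Ehrenfeucht--Mostowski (EM) models; the Hanf number $\hanf{\chi} = \beth_{(2^\chi)^+}$ is exactly the threshold at which the Erd\H{o}s--Rado theorem supplies the required indiscernibles.

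First, I would set things up: assume for contradiction some $M \in \K_{\ge \hanf{\chi}}$ is not $\chi^+$-saturated, and fix $N \lea M$ with $\|N\| \le \chi$ and a Galois type $p \in \gS(N)$ not realized in $M$. Using the Shelah Presentation Theorem, represent $\K$ as $\operatorname{PC}(T^*, \Gamma, L^*)$ for some first-order theory $T^*$ in an expanded language $L^*$ of size $\LS(\K)$, so that models of $\K$ are (up to isomorphism) the $L(\K)$-reducts of models of $T^*$ omitting each type in $\Gamma$. Fix an $L^*$-expansion $M^*$ of $M$ and let $N^*$ be the induced expansion of $N$. The non-realization of $p$ in $M$ can be encoded by omission of a collection $\Sigma_p$ of complete $L^*$-types over parameters from $N^*$, where $|\Sigma_p| \le 2^{\chi + \LS(\K)} = 2^\chi$: namely, $\Sigma_p$ enumerates those syntactic types whose realization in an $L^*$-expansion of a model of $T^*$ extending $N^*$ would witness a realization of the Galois type $p$.

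Second, I would apply the Morley/EM construction. Since $|M^*| \ge \beth_{(2^\chi)^+}$, the Erd\H{o}s--Rado partition theorem yields a set of order indiscernibles $I \subseteq M^*$ over $N^*$ of arbitrary prescribed order type, from which one reads off an EM-blueprint $\Phi$. For every linear order $J$, the structure $\operatorname{EM}_{L^*}(J, \Phi)$ is a model of $T^*$ that omits $\Gamma$ (so its $L(\K)$-reduct lies in $\K$), contains an isomorphic copy of $N^*$, and continues to omit $\Sigma_p$ over that copy (because each type in $\Sigma_p$ is already omitted in the indiscernibles and this is preserved by the EM functor). Taking $|J| = \lambda$ yields some $M' \in \K_\lambda$ containing (a copy of) $N$ and not realizing $p$. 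By categoricity in $\lambda$, $M'$ is isomorphic to the unique model of size $\lambda$, which is $\chi^+$-saturated by hypothesis and so must realize $p$. This contradiction completes the argument.

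The main obstacle is the encoding step: one must ensure that non-realization of a Galois type, an inherently semantic condition defined via an automorphism equivalence relation on a monster model, is faithfully captured by omission of a bounded-size family of syntactic $L^*$-types over $N^*$ in the sense of the Presentation Theorem. This requires verifying that (i) realizations of $p$ in an element of $\K$ correspond bijectively, up to $L^*$-expansion, to realizations of some type in $\Sigma_p$, (ii) the count $|\Sigma_p| \le 2^\chi$ fits within the budget of Morley's omitting types theorem, and (iii) the property of omitting $\Sigma_p$ is inherited by every EM-structure built from the blueprint extracted from $M^*$. Once these bookkeeping issues are in place, the cardinal arithmetic encoded in $\hanf{\chi}$ does the rest via Erd\H{o}s--Rado.
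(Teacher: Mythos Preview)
The paper does not prove this statement; it is recorded as a Fact with a citation to \cite{sh394} (an exposition also appears in \cite{baldwinbook09}). Your outline is the standard argument from those sources and is correct in its broad strokes.

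Two points of phrasing deserve tightening. First, your description of $\Sigma_p$ as the types ``whose realization \ldots\ would witness a realization of the Galois type $p$'' is vague as stated, since whether a realization of a syntactic type has Galois type $p$ might in principle depend on the ambient model. The clean observation that makes this work is that the complete $L^*$-type of an element over $N^*$ \emph{determines} its Galois type over $N$: if $a \in M_1^*$ and $b \in M_2^*$ have the same $L^*$-type over $N^*$, then (using that $T^*$ has Skolem functions) the $L^*$-substructures generated by $N^* \cup \{a\}$ and $N^* \cup \{b\}$ are isomorphic over $N^*$ via a map sending $a$ to $b$, and the Presentation Theorem guarantees their $L(\K)$-reducts are $\K$-substructures, so $\gtp(a/N) = \gtp(b/N)$. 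Once you have this, $\Sigma_p$ is unambiguously the set of complete $L^*$-types over $N^*$ determining the Galois type $p$, and $M^*$ omits $\Sigma_p$ precisely because $M$ omits $p$.

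Second, the reason $\operatorname{EM}_{L^*}(J, \Phi)$ omits $\Sigma_p$ is not that ``each type in $\Sigma_p$ is already omitted in the indiscernibles'' but rather that every element of the EM hull is a Skolem term in $N^*$ and finitely many indiscernibles, so its complete $L^*$-type over $N^*$ is already realized in $M^*$ (by evaluating the same term at indiscernibles from the original sequence), hence lies outside $\Sigma_p$. The same argument shows the EM model omits $\Gamma$, so its reduct is in $\K$. With these two clarifications your items (i)--(iii) go through and the contradiction with categoricity follows as you describe.
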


\begin{corollary}\label{easy-downward}
  Let $\mu = \beth_\mu > \LS (\K)$. Assume that $\K$ is categorical in a $\lambda > \mu$. Then $\K$ is categorical in $\mu$.
\end{corollary}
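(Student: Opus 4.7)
The key observation is that the hypothesis $\mu = \beth_\mu$ ensures $\mu$ is a strong limit, so for every $\chi < \mu$ we have $\hanfns{}{\chi} = \beth_{(2^\chi)^+} < \beth_\mu = \mu$, and similarly $\hanfns{}{\mu_0^+} < \mu$ for every $\mu_0 < \mu$. The proof will have two halves: first upgrade the saturation of the model of size $\lambda$ all the way down to the model of size $\mu$, then use uniqueness of saturated models to conclude.

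\textbf{Step 1: The model of size $\mu$ is $\mu$-saturated.} First I would apply Corollary \ref{cor-mu-sat}: since $\mu$ is a strong limit above $\LS(\K)$, $\sup_{\mu_0 < \mu} \hanfns{}{\mu_0^+} \le \mu < \lambda$, so the model of size $\lambda$ is $\mu$-saturated, and in particular $\chi^+$-saturated for every $\chi$ with $\LS(\K) \le \chi < \mu$. Fix any such $\chi$. By the AEC omitting types theorem (Fact \ref{omitting-type}), every model in $\K_{\ge \hanfns{}{\chi}}$ is $\chi^+$-saturated. Since $\hanfns{}{\chi} < \mu$, any model of size $\mu$ (such a model exists because $\K$ has no maximal models and $\LS(\K) < \mu$) is $\chi^+$-saturated. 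As $\chi < \mu$ was arbitrary and every type over a submodel of cardinality $<\mu$ is a type over a submodel of cardinality $\chi$ for some $\chi < \mu$, the model of size $\mu$ is $\mu$-saturated.

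\textbf{Step 2: Uniqueness.} By the Shelah-Villaveces theorem (Fact \ref{shvi}), $\K$ is $\mu$-superstable (using $\lambda > \mu \ge \LS(\K)$), so by Remark \ref{jep remark} the class $\K_{\ge \mu}$ has joint embedding. Any two models of size $\mu$ that are $\mu$-saturated (hence saturated in $\|M\|$) are isomorphic by the standard back-and-forth argument using amalgamation and joint embedding. Since by Step 1 every model of size $\mu$ is $\mu$-saturated, all models of size $\mu$ are isomorphic, i.e., $\K$ is categorical in $\mu$.

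\textbf{Main obstacle.} There is no serious obstacle: the argument is a direct combination of Corollary \ref{cor-mu-sat} with the omitting types theorem, facilitated by the strong-limit hypothesis on $\mu$. The only bookkeeping is to verify that $\mu > \LS(\K)$ is enough to invoke Corollary \ref{cor-mu-sat} (it is, by hypothesis) and that joint embedding in $\K_\mu$ is available to run the uniqueness-of-saturated-models argument (it is, via Shelah-Villaveces plus Remark \ref{jep remark}).
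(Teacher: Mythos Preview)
Your proposal is correct and follows essentially the same route as the paper's proof: invoke Corollary~\ref{cor-mu-sat} to get that the categorical model is $\mu$-saturated, then apply the omitting types theorem (Fact~\ref{omitting-type}) for each $\chi \in [\LS(\K),\mu)$, using $\hanf{\chi} < \beth_\mu = \mu$ to push $\chi^+$-saturation down to all models of size $\mu$, and finish with uniqueness of saturated models. You simply spell out a few details the paper leaves implicit (the verification that $\sup_{\mu_0 < \mu} \hanf{\mu_0^+} \le \mu$, and the appeal to Shelah--Villaveces plus Remark~\ref{jep remark} to secure joint embedding for the uniqueness argument).
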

\begin{proof}
  By Corollary \ref{cor-mu-sat}, the model of size $\lambda$ is $\mu$-saturated. By Fact \ref{omitting-type}, every model in $\K_{\ge \mu}$ is $\mu$-saturated. In particular, every model of size $\mu$ is saturated. By uniqueness of saturated models, $\K$ is categorical in $\mu$.
\end{proof}
\begin{remark}
  For Corollary \ref{easy-downward}, Fact \ref{nmm-fact} shows that the no maximal models hypothesis is not necessary.
\end{remark}
\begin{remark}
  Corollary \ref{easy-downward} appears as \cite[Theorem 10.16]{indep-aec-v4} with an additional assumption of $(<\mu)$-tameness.
\end{remark}

We can improve on Corollary \ref{easy-downward} using the more powerful downward transfer of \cite{sh394}. A key concept in the proof is the following variation on tameness (an important locality for Galois types isolated by Grossberg and the first author in \cite{tamenessone}). We use the notation in \cite[Definition 11.6]{baldwinbook09}.

\begin{definition}
  Let $\chi, \mu$ be cardinals with $\LS (\K) \le \chi \le \mu$. $\K$ is \emph{$(\chi, \mu)$-weakly tame} if for any saturated $M \in \K_\mu$, any $p, q \in \gS (M)$, if $p \neq q$, there exists $M_0 \in \K_{\chi}$ with $M_0 \lea M$ and $p \rest M_0 \neq q \rest M_0$.
\end{definition}

The importance of weak tameness is that it is known to follow from categoricity in a suitable cardinal: this appears as \cite[Main Claim II.2.3]{sh394} and a simplified improved argument  is in \cite[Theorem 11.15]{baldwinbook09}.

\begin{fact}\label{weak-tameness-from-categ-fact}
  Let $\lambda > \mu \ge H_1$. Assume that $\K$ is categorical in $\lambda$, and the model of cardinality $\lambda$ is $\mu^+$-saturated. Then there exists $\chi < H_1$ such that $\K$ is $(\chi, \mu)$-weakly tame.
\end{fact}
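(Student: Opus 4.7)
The plan is to argue by contradiction along the lines of Shelah's proof of Main Claim II.2.3 in \cite{sh394}, following the streamlined presentation in \cite[Theorem 11.15]{baldwinbook09}. Suppose that for every $\chi < H_1$ the class $\K$ fails to be $(\chi, \mu)$-weakly tame. Then for each such $\chi$ there exist a saturated $M^\chi \in \K_\mu$ and distinct Galois types $p^\chi \neq q^\chi \in \gS(M^\chi)$ whose restrictions to every $M_0 \lea M^\chi$ with $M_0 \in \K_\chi$ coincide. The objective is to derive a contradiction with stability of $\K$ in $\mu$, which follows from the categoricity hypothesis via the Shelah--Villaveces theorem (Fact~\ref{shvi}) applied at $\mu$.

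First I would use the failure of $(\chi, \mu)$-weak tameness to build, by transfinite recursion on a binary tree whose height $\alpha$ is allowed to range up to $H_1$, a family $\seq{N_\eta : \eta \in 2^{<\alpha}}$ of $\lea$-increasing saturated models in $\K_\mu$ together with pairs of realizations $c_\eta^0, c_\eta^1$ for which $\gtp(c_\eta^0 / N_\eta) = \gtp(c_\eta^1 / N_\eta)$, but the two types separate once one passes to the branch-successor models $N_{\eta \frown \langle 0 \rangle}$ and $N_{\eta \frown \langle 1 \rangle}$. The calibration of the heights so that each $\chi < H_1$ enters as a witness to non-tameness at some level is precisely what forces the Hanf-number bound $H_1$ to appear in the conclusion.

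Next I would realize this tree inside the unique model of size $\lambda$. Ehrenfeucht--Mostowski functors, which exist for $\K$ because amalgamation together with categoricity produces arbitrarily large models with suitable blueprints, let one stretch the tree to an EM-model over a large enough linear order. This produces more than $\mu$ distinct Galois types over a single saturated submodel of size $\mu$ sitting inside a model of size $\lambda$. The $\mu^+$-saturation of the $\lambda$-model, which is part of the hypothesis, ensures that all the relevant branch-types are actually realized inside it; combined with the bound on $|\gS(M)|$ coming from stability in $\mu$, this yields the desired contradiction.

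The main obstacle is the tree construction itself: one must choose heights and branchings carefully so that each split can be distinguished by a type over a model of a prescribed size $\chi$, while keeping the total tree small enough to embed into a categoricity model. This is the combinatorial heart of Main Claim II.2.3 of \cite{sh394} and is usually organized as a case split on whether the witnesses to failure of tameness can be chosen uniformly or only diagonally; each case requires its own delicate argument. By contrast, the EM-stretching step is comparatively routine once the abstract tree is in hand, and the passage from ``types realized in the $\lambda$-model'' to ``many types over a saturated $\mu$-model'' is a direct application of $\mu^+$-saturation together with the AEC omitting types theorem (Fact~\ref{omitting-type}).
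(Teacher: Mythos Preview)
The paper does not actually give a proof of this statement: it is recorded as a \emph{Fact} and attributed to \cite[Main Claim II.2.3]{sh394} with the simplified exposition in \cite[Theorem 11.15]{baldwinbook09}. There is thus nothing in the paper to compare your argument against; the authors simply quote the result.

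Your sketch is broadly in the spirit of the cited sources --- failure of weak tameness below $H_1$ feeds a splitting-tree construction yielding too many Galois types over a saturated model of size $\mu$, contradicting stability in $\mu$ --- but a couple of points in your outline do not match those arguments and would not work as written. The contradiction in Shelah's and Baldwin's proofs comes directly from counting the branch types over a single $\mu$-sized model; the Ehrenfeucht--Mostowski machinery and the omitting types theorem (Fact~\ref{omitting-type}) play no role at this step, and invoking them here obscures rather than completes the argument. The $\mu^+$-saturation hypothesis is used not to realize branch types inside the categoricity model, but to ensure that the base model over which the tree is built is saturated, so that the construction lives inside the class where ``weak'' tameness is being tested. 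Also, the tree is not indexed by $2^{<\alpha}$ with $\alpha$ ranging up to $H_1$; rather one uses a single tree of height $\mu$ (or a cofinal sequence in $\mu$) whose nodes carry witnesses of sizes cofinal in $H_1$, and it is the Morley-style Hanf-number compactness that converts ``fails for every $\chi < H_1$'' into a uniform failure one can iterate. If you want to reconstruct the proof, follow \cite[Chapter 11]{baldwinbook09} closely rather than improvising the combinatorics.
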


We obtain:

\begin{theorem}\label{weak-tameness-from-categ}
  Let $\mu \ge \LS (\K)$. Let $\lambda \ge \hanf{\mu^+}$. If $\K$ is categorical in $\lambda$, then there exists $\chi < H_1$ such that $\K$ is $(\chi, \mu)$-weakly tame.
\end{theorem}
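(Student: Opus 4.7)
The plan is to split on whether $\mu$ is below or above the first Hanf number $H_1$, and in the interesting case bootstrap Fact \ref{weak-tameness-from-categ-fact} via Corollary \ref{cor-mu-sat}.

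First I would dispose of the case $\mu < H_1$ trivially by taking $\chi := \mu$. Then $(\chi, \mu)$-weak tameness holds vacuously: given distinct $p, q \in \gS(M)$ with $M$ saturated of size $\mu$, we can take $M_0 := M$, which lies in $\K_\chi = \K_\mu$, and then $p \rest M_0 = p \neq q = q \rest M_0$.

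So assume $\mu \ge H_1$. The strategy is to feed Fact \ref{weak-tameness-from-categ-fact} the missing ingredient — namely $\mu^+$-saturation of the model of size $\lambda$. To obtain this, I would apply Corollary \ref{cor-mu-sat} with the parameter $\mu$ replaced by $\mu^+$. The two hypotheses are: (i) $\mu^+ > \LS(\K)$, which follows at once from $\mu \ge \LS(\K)$, and (ii) $\lambda \ge \sup_{\mu_0 < \mu^+} \hanf{\mu_0^+}$. For (ii), any $\mu_0 < \mu^+$ satisfies $\mu_0 \le \mu$, so by monotonicity of $\hanf{\cdot}$ we have $\hanf{\mu_0^+} \le \hanf{\mu^+}$, and equality is attained at $\mu_0 = \mu$; therefore the supremum is exactly $\hanf{\mu^+}$, which is $\le \lambda$ by hypothesis. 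Corollary \ref{cor-mu-sat} then yields that the (categoricity) model of size $\lambda$ is $\mu^+$-saturated.

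With that in hand, I would invoke Fact \ref{weak-tameness-from-categ-fact} with the same $\lambda$ and $\mu$: the inequality $\lambda > \mu$ follows since $\lambda \ge \hanf{\mu^+} > \mu$, the inequality $\mu \ge H_1$ is the current case, categoricity in $\lambda$ is a standing assumption, and $\mu^+$-saturation of the $\lambda$-sized model has just been established. The fact then delivers some $\chi < H_1$ such that $\K$ is $(\chi, \mu)$-weakly tame, completing the proof. There is no real obstacle here; the only thing requiring mild care is the cardinal arithmetic verifying $\sup_{\mu_0 < \mu^+} \hanf{\mu_0^+} = \hanf{\mu^+}$, which is the sole reason the bound $\hanf{\mu^+}$ (rather than $\hanf{\mu}$) appears in the statement.
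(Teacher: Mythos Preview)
Your proposal is correct and follows the same route as the paper: obtain $\mu^+$-saturation of the categoricity model via Corollary \ref{cor-mu-sat}, then feed this into Fact \ref{weak-tameness-from-categ-fact}. The paper's two-line proof omits the case split on whether $\mu < H_1$; your explicit treatment of that trivial case (taking $\chi := \mu$) actually makes the argument more complete, since Fact \ref{weak-tameness-from-categ-fact} is stated only for $\mu \ge H_1$.
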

\begin{proof}
  By Corollary \ref{cor-mu-sat}, the model of size $\lambda$ is $\mu^+$-saturated. Now apply Fact \ref{weak-tameness-from-categ-fact}.
\end{proof}

We can derive the promised strenghtening of Corollary \ref{easy-downward}. By the proof of \cite[Theorem 14.9]{baldwinbook09} (originally \cite[II.1.6]{sh394}):

\begin{fact}\label{downward-categ-fact}
  If $\K$ is categorical in a $\lambda > H_2$, $\K$ is $(\chi, H_2)$-weakly tame for some $\chi < H_1$, and the model of size $\lambda$ is $\chi$-saturated, then $\K$ is categorical in $H_2$.
\end{fact}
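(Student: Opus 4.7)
The plan is to show that every $M \in \K_{H_2}$ is saturated; categoricity in $H_2$ then follows immediately from uniqueness of saturated models of a fixed cardinality under amalgamation.

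First I would transfer the $\chi$-saturation of the $\lambda$-sized model downward using Fact \ref{omitting-type}. For each $\chi' \in [\LS(\K), \chi)$, the $\lambda$-sized model is $(\chi')^+$-saturated, and the assumption $\chi < H_1$ gives $\hanf{\chi'} < H_2$, so every model in $\K_{\ge H_2}$ is $(\chi')^+$-saturated; varying $\chi'$, every such model is $\chi$-saturated. In parallel, \cite[Claim 1.7.(b)]{sh394} gives stability in every $\mu < \lambda$, and together with the regular cofinality $(2^{H_1})^+$ of $H_2$ this lets me build a saturated $M^\ast \in \K_{H_2}$ as the union of a continuous increasing chain $\langle M_i : i < H_2 \rangle$ in which each $M_{i+1}$ realizes all Galois types over $M_i$.

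The main obstacle is upgrading $\chi$-saturation to full $H_2$-saturation for an \emph{arbitrary} $M \in \K_{H_2}$. Here $(\chi, H_2)$-weak tameness is essential: it says that Galois types over saturated $H_2$-sized models are determined by their restrictions to $\chi$-sized submodels. The strategy is to fix such an $M$ together with the saturated $M^\ast$ from the previous step, embed both into the categoricity model of size $\lambda$ via amalgamation, and perform a back-and-forth construction of length $H_2$ working at the $\chi$-level. At each stage one uses the $\chi$-saturation of both models to realize the relevant $\chi$-sized type, and one uses weak tameness over $M^\ast$ to glue these $\chi$-sized matches into a coherent partial isomorphism whose union is an isomorphism $M \cong M^\ast$. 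A delicate point is that weak tameness directly controls only types over saturated $H_2$-sized models, so the back-and-forth must be arranged so that the saturation of $M^\ast$ is actually \emph{used} to force global coherence rather than just local matching; managing this is the technical core of the argument in \cite[II.1.6]{sh394} as presented in \cite[Theorem 14.9]{baldwinbook09}.

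With every model of size $H_2$ thereby shown to be saturated, uniqueness of saturated models yields the desired categoricity in $H_2$.
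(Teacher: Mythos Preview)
The paper does not prove this statement: it is recorded as a \emph{Fact}, introduced by ``By the proof of \cite[Theorem 14.9]{baldwinbook09} (originally \cite[II.1.6]{sh394})'', with no argument supplied. So there is nothing in the paper to compare your proposal against beyond the bare citation.

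Relative to the cited sources, your outline is broadly on target. The preliminary steps are correct: for $\chi' < \chi < H_1$ one has $\hanf{\chi'} \le H_2$, so Fact~\ref{omitting-type} yields that every model of size $\ge H_2$ is $\chi$-saturated; and stability below $\lambda$ together with $\cf(H_2) = (2^{H_1})^+$ lets one build a saturated $M^\ast \in \K_{H_2}$. You also correctly locate the crux in the upgrade from $\chi$-saturation to full saturation using $(\chi,H_2)$-weak tameness.

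One caution on how you phrase that last step. A literal element-by-element back-and-forth between an arbitrary $M$ and the saturated $M^\ast$ cannot work as stated: in the ``back'' direction you would need to realize in $M$ a type over a domain of size approaching $H_2$, and $M$ is only known to be $\chi$-saturated. You do flag this asymmetry as ``the delicate point'', but your description still frames the argument as a back-and-forth. The treatment in \cite[Chapter 14]{baldwinbook09} is organized differently: it is a direct transfer showing that a $\chi$-saturated model of size $H_2$ must be saturated, using weak tameness to reduce type-realization to the $\chi$-level together with the EM-model and stability machinery developed earlier in that chapter. Since you already defer to those references for the technical core, your proposal is best read as a faithful high-level summary rather than an independent proof.
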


\begin{theorem}
  If $\K$ is categorical in a $\lambda \ge \hanf{H_2^+}$, then $\K$ is categorical in $H_2$.
\end{theorem}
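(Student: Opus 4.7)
The plan is to combine the weak tameness result (Theorem \ref{weak-tameness-from-categ}), the saturation of the model of size $\lambda$ (Corollary \ref{cor-mu-sat}), and the downward categoricity transfer (Fact \ref{downward-categ-fact}). The hypothesis $\lambda \ge \hanf{H_2^+}$ is tailored so that both Theorem \ref{weak-tameness-from-categ} and Corollary \ref{cor-mu-sat} apply at the parameter $\mu = H_2$.

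First I would invoke Theorem \ref{weak-tameness-from-categ} with $\mu := H_2$. Since $\mu \ge \LS(\K)$ and $\lambda \ge \hanf{\mu^+} = \hanf{H_2^+}$, we obtain some $\chi < H_1$ such that $\K$ is $(\chi, H_2)$-weakly tame. Next I would apply Corollary \ref{cor-mu-sat} with $\mu := H_2$; this requires $\lambda \ge \sup_{\mu_0 < H_2} \hanf{\mu_0^+}$. Because $H_2 = \hanf{H_1}$ is a strong limit (in fact a beth fixed point of a strong kind), for every $\mu_0 < H_2$ we have $\mu_0^+ < H_2$ and $\hanf{\mu_0^+} < H_2$, so the supremum is at most $H_2 < \hanf{H_2^+} \le \lambda$. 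Corollary \ref{cor-mu-sat} then tells us that the model of size $\lambda$ is $H_2$-saturated, and in particular $\chi$-saturated since $\chi < H_1 < H_2$.

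Finally, noting that $\lambda \ge \hanf{H_2^+} > H_2$, all three hypotheses of Fact \ref{downward-categ-fact} are in place: categoricity in $\lambda > H_2$, $(\chi, H_2)$-weak tameness for a $\chi < H_1$, and $\chi$-saturation of the model of size $\lambda$. We conclude that $\K$ is categorical in $H_2$, as desired. There is essentially no obstacle: the main content lies in the previously proved results, and the only thing to verify is the arithmetic of Hanf numbers, namely that $\hanf{H_2^+}$ majorizes both $\hanf{H_2^+}$ (trivially, for weak tameness) and $\sup_{\mu_0 < H_2} \hanf{\mu_0^+}$ (for the saturation corollary), which uses that $H_2$ is a strong limit.
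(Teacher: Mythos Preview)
Your approach is the same as the paper's: invoke Theorem \ref{weak-tameness-from-categ} at $\mu = H_2$, use Corollary \ref{cor-mu-sat} to get enough saturation, and finish with Fact \ref{downward-categ-fact}. The paper actually applies Corollary \ref{cor-mu-sat} only at $\mu = \chi$ (which suffices, since Fact \ref{downward-categ-fact} only needs $\chi$-saturation), whereas you apply it at $\mu = H_2$; either works.

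There is, however, an arithmetic slip in your justification. You assert that $H_2$ is a beth fixed point and that $\hanf{\mu_0^+} < H_2$ for every $\mu_0 < H_2$. Neither is true in general: $H_2 = \beth_{(2^{H_1})^+}$ is a strong limit but not a beth fixed point, and for example $\mu_0 = \beth_{2^{H_1}} < H_2$ already gives $\hanf{\mu_0^+} > H_2$. The bound you actually need is the weaker (and correct) one: for $\mu_0 < H_2$ one has $\mu_0^+ \le H_2 < H_2^+$, so by monotonicity of $\hanf{\cdot}$ we get $\hanf{\mu_0^+} \le \hanf{H_2^+} \le \lambda$, whence $\sup_{\mu_0 < H_2} \hanf{\mu_0^+} \le \lambda$. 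With this correction your argument goes through. (Alternatively, just apply Corollary \ref{cor-mu-sat} at $\mu = \chi < H_1$ as the paper does; then the required supremum is bounded by $\hanf{\chi} < H_2 < \lambda$, using that $H_1$ is a strong limit.)
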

\begin{proof}
  By Theorem \ref{weak-tameness-from-categ}, there exists $\chi < H_1$ such that $\K$ is $(\chi, H_2)$-weakly tame. By Corollary \ref{cor-mu-sat}, the model of size $\lambda$ is $\chi$-saturated. Now apply Fact \ref{downward-categ-fact}.
\end{proof}

\begin{remark}
  We can replace $H_2$ above by any collection cardinal, see \cite[Definition 14.5]{baldwinbook09} and the proof of Theorem 14.9 there.
\end{remark}

\section{Good frames and weak tameness}\label{good-frame-sect}

In \cite[Definition II.2.1]{shelahaecbook}\footnote{We do \emph{not} mention Shelah's axiom (B) requiring the existence of a superlimit model of size $\mu$. In fact several papers (e.g.\ \cite{jrsh875}) define good frames without this assumption. However the good frame that we build here is categorical in $\mu$, hence has a superlimit.}, Shelah introduces \emph{good frames}, a local notion of independence for AECs. This is the central concept of his book and has seen many other applications, such as a proof of Shelah's categoricity conjecture for universal classes \cite{ap-universal-v6}. A \emph{good $\mu$-frame} is a triple $\s = (\K_\mu, \nf, \Sbs)$ where:

\begin{enumerate}
  \item $\K$ is a nonempty AEC which has $\mu$-amalgamation, $\mu$-joint embedding, no maximal models, and is stable in $\mu$.
  \item For each $M \in \K_\mu$, $\Sbs (M)$ (called the set of \emph{basic types} over $M$) is a set of nonalgebraic Galois types over $M$ satisfying (among others) the \emph{density property}: if $M \lta N$ are in $\K_\mu$, there exists $a \in |N| \backslash |M|$ such that $\gtp (a / M; N) \in \Sbs (M)$.
  \item $\nf$ is an (abstract) independence relation on types of length one over models in $\K_\lambda$ satisfying the basic properties of first-order forking in a superstable theory: invariance, monotonicity, extension, uniqueness, transitivity, local character, and symmetry (we will not give their exact meaning here).
\end{enumerate}

As in \cite[Definition II.6.35]{shelahaecbook}, we say that a good $\mu$-frame $\s$ is \emph{type-full} if for each $M \in \K_\mu$, $\Sbs (M)$ consists of \emph{all} the nonalgebraic types over $M$. We focus on type-full good frames in this paper. Given a type-full good $\mu$-frame $\s = (\K_\mu, \nf, \Sbs)$ and $M_0 \lea M$ both in $\K_\mu$, we say that a nonalgebraic type $p \in \gS (M)$ \emph{does not $\s$-fork over $M_0$} if it does not fork over $M_0$ according to the abstract independence relation $\nf$ of $\s$. We say that a good $\mu$-frame $\s$ is \emph{on $\K_\mu$} if its underlying class is $\K_\mu$. 

It was pointed out in \cite{ss-tame-toappear-v3} (and further improvements in \cite[Section 10]{indep-aec-v4} or \cite[Theorem 6.12]{vv-symmetry-transfer-v2}) that tameness can be combined with superstability to build a good frame. At a meeting in the winter of 2015 in San Antonio, the first author asked whether weak tameness could be used instead. This is not a generalization for the sake of generalization because weak tameness (but not tameness) is known to follow from categoricity. As it turns out, the methods of \cite{vv-symmetry-transfer-v2} can be used to answer in the affirmative:

\begin{theorem}\label{good-frame-weak-tameness}
  Let $\lambda > \mu \ge \LS (\K)$. Assume that $\K$ is superstable in every $\chi \in [\mu, \lambda]$ and has $\lambda$-symmetry.
  
  If $\K$ is $(\mu, \lambda)$-weakly tame, then there exists a type-full good $\lambda$-frame with underlying class the saturated models in $\K_{\lambda}$.
\end{theorem}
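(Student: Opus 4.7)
The plan is to build a type-full good $\lambda$-frame $\s$ whose underlying class is the saturated models in $\K_\lambda$ and whose non-forking relation is given by $\mu$-forking from below: for $M_0 \lea M$ both saturated of size $\lambda$ and $p \in \gS (M)$, declare that $p$ does not $\s$-fork over $M_0$ if there is $N_0 \in \K_\mu$ with $N_0 \lea M_0$ such that $p$ does not $\mu$-split over $N_0$. Equivalently, since $M_0$ is saturated of size $\lambda > \mu$, $p$ does not $\mu$-fork over $M_0$ in the sense of Definition \ref{mu-forking-def}. Basic types are taken to be all nonalgebraic types, so density holds trivially.

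First I would verify the underlying class is a well-behaved AEC. By Fact \ref{sym-transfer} applied to the superstability hypothesis, $\K$ has $\chi$-symmetry for every $\chi \in [\mu, \lambda]$. Iterating Fact \ref{union-sat-monica} upward from $\mu$ then yields that $\Ksatp{\lambda}$ is an AEC with $\LS (\Ksatp{\lambda}) = \lambda$; joint embedding, amalgamation, no maximal models and stability in $\lambda$ for its size-$\lambda$ part all follow from the assumed superstability in $\lambda$.

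Next I would check the properties of $\nf$ one by one. Invariance and monotonicity are inherited immediately from $\mu$-splitting. Extension above $\mu$ is given directly by Fact \ref{mu-forking-props}(\ref{mu-forking-3}). Local character follows from the ``no long splitting chains'' clause of $\mu$-superstability, applied to a $\mu$-sized resolution of the base and then lifted to the saturated models using uniqueness of $\mu$-forking. Transitivity follows from extension together with uniqueness in the standard way, and the symmetry axiom for the frame is obtained by reformulating the assumed $\lambda$-symmetry (Definition \ref{sym defn}) using uniqueness.

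The main obstacle is uniqueness of $\s$-non-forking extensions over saturated models of size $\lambda$, and this is precisely where $(\mu, \lambda)$-weak tameness enters. Suppose $p, q \in \gS (M)$ both do not $\s$-fork over $M_0$ and $p \rest M_0 = q \rest M_0$. Inside the saturated $M_0$ I would first find a common $N_1 \in \K_\mu$ with $N_1 \lea M_0$ witnessing non-$\mu$-splitting for both $p$ and $q$ (obtained as a common $\mu$-sized extension of the two separate witnesses), together with an intermediate $M^\ast \in \K_\mu$, $N_1 \lea M^\ast \lea M_0$, which is limit over $N_1$. For any $N \in \K_\mu$ with $N \lea M$, amalgamate $N$ and $M^\ast$ inside $M$ to obtain $N^\ast \in \K_\mu$ containing both with $N^\ast \lea M$; then $p \rest N^\ast$ and $q \rest N^\ast$ are both extensions of their common restriction to $M^\ast$ that explicitly do not $\mu$-fork over $(N_1, M^\ast)$, so by Fact \ref{mu-forking-props}(\ref{mu-forking-1}) they agree, giving $p \rest N = q \rest N$. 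Since $M$ is saturated of size $\lambda$ and $\K$ is $(\mu, \lambda)$-weakly tame, these equalities over all $\mu$-sized submodels of $M$ force $p = q$.
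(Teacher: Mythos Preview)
Your proposal follows the same route as the paper: define $\s$-non-forking via $\mu$-non-forking, use Fact \ref{sym-transfer} and iterated Fact \ref{union-sat-monica} to see that $\Ksatp{\lambda}$ is an AEC, read off extension from Fact \ref{mu-forking-props}(\ref{mu-forking-3}), get frame-symmetry from the assumed $\lambda$-symmetry, and deduce uniqueness from Fact \ref{mu-forking-props}(\ref{mu-forking-1}) together with $(\mu,\lambda)$-weak tameness. The paper is terser, delegating local character and transitivity to \cite{ss-tame-toappear-v3} and frame-symmetry to Section 3 of \cite{vv-symmetry-transfer-v2}, but the content matches.

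One technical slip in your uniqueness paragraph: you claim a common $N_1$ witnessing non-$\mu$-splitting for both $p$ and $q$ can be ``obtained as a common $\mu$-sized extension of the two separate witnesses,'' but non-$\mu$-splitting is \emph{not} preserved under enlarging the base. The correct move is to keep the separate witnesses $N^p, N^q \lea M_0$, choose (using saturation of $M_0$) some $M^\ast \in \K_\mu$ with $M^\ast \lea M_0$ limit over each of them, and then argue that for any $N^\ast \in \K_\mu$ with $M^\ast \lea N^\ast \lea M$ the two restrictions $p \rest N^\ast$, $q \rest N^\ast$ coincide. For that last step one needs, beyond Fact \ref{mu-forking-props}(\ref{mu-forking-1}), the weak transitivity of $\mu$-splitting (alluded to in the remark after Definition \ref{ss assm}) to pass between the two bases; this is exactly the detail the paper outsources to \cite{ss-tame-toappear-v3}. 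With that correction your argument goes through.
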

\begin{proof}
  First observe that limit models in $\K_\lambda$ are unique (by Fact \ref{sym-uq-lim}), hence saturated. By Fact \ref{sym-transfer}, $\K$ has $\chi$-symmetry for every $\chi \in [\mu, \lambda]$. By Fact \ref{union-sat-monica}, for every $\chi \in [\mu, \lambda)$, $\Ksatp{\chi^+}$, the class of $\chi^+$-saturated models in $\K_{\ge \chi^+}$ is an AEC with $\LS (\Ksatp{\chi^+}) = \chi^+$. Therefore (see \cite[Lemma 6.7]{vv-symmetry-transfer-v2}) $\Ksatp{\lambda}$ is an AEC with $\LS (\Ksatp{\lambda}) = \lambda$. By the $\lambda$-superstability assumption, $\Ksatp{\lambda}_\lambda$ is nonempty, has amalgamation, no maximal models, and joint embedding. It is also stable in $\lambda$. We want to define a type-full good $\lambda$-frame $\s$ on $\Ksatp{\lambda}_\lambda$. We define forking in the sense of $\s$ ($\s$-forking) as follows: For $M \lea N$ saturated of size $\lambda$, a nonalgebraic $p \in \gS (N)$ does not $\s$-fork over $M$ if and only if it does not $\mu$-fork over $M$. 

Now most of the axioms of good frames are verified in Section 4 of \cite{ss-tame-toappear-v3}, the only properties that remain to be checked are extension, uniqueness, and symmetry. Extension is by Fact \ref{mu-forking-props}.(\ref{mu-forking-3}), and uniqueness is by uniqueness in $\mu$ (Fact \ref{mu-forking-props}.(\ref{mu-forking-1})) and the weak tameness assumption. As for symmetry, we know that $\lambda$-symmetry holds, hence we obtain the result by Section 3 of \cite{vv-symmetry-transfer-v2}.
\end{proof}
\begin{remark}
  If $\lambda = \mu^+$ above, then the hypotheses reduce to ``$\K$ is superstable in $\mu$ and $\mu^+$ and $\K$ has $\mu^+$-symmetry''.
\end{remark}
%\svnote{Monica, there was a remark here saying that without weak tameness we still obtain everything except uniqueness, but I am not sure about symmetry now, so I just removed the remark.}

Of course we can now combine this construction with our previous results:

\begin{corollary}\label{good-frame-weak-tameness-2}
  Let $\lambda > \mu \ge \LS (\K)$. Assume that $\K$ is superstable in every $\chi \in [\mu, \hanf{\lambda})$. If $\K$ is $(\mu, \lambda)$-weakly tame, then there exists a type-full good $\lambda$-frame with underlying class the saturated models in $\K_{\lambda}$.
\end{corollary}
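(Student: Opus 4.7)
The plan is to reduce this to Theorem \ref{good-frame-weak-tameness} by showing that the two assumptions it requires (superstability in every $\chi \in [\mu,\lambda]$ and $\lambda$-symmetry) both follow from our strengthened superstability hypothesis, together with weak tameness being already given.

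First I would observe that superstability in every $\chi \in [\mu,\lambda]$ is immediate: by Definition \ref{hanf-def} we have $\hanf{\lambda} = \beth_{(2^{\lambda})^+} > \lambda$, so the interval $[\mu,\lambda]$ is contained in $[\mu,\hanf{\lambda})$ and the hypothesis applies.

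The second (and only real) step is to derive $\lambda$-symmetry. For this I would apply Theorem \ref{sym-from-superstab} with the role of its parameter $\mu$ played by our $\lambda$. That theorem produces some $\lambda' < \hanf{\lambda}$ such that superstability in every $\chi \in [\lambda, \lambda')$ is enough to yield $\lambda$-symmetry. Since $\lambda' < \hanf{\lambda}$, the interval $[\lambda, \lambda')$ sits inside $[\mu, \hanf{\lambda})$, and so our hypothesis delivers exactly what Theorem \ref{sym-from-superstab} asks for. Consequently $\K$ has $\lambda$-symmetry.

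With superstability in $[\mu,\lambda]$, with $\lambda$-symmetry in hand, and with $(\mu,\lambda)$-weak tameness assumed, all hypotheses of Theorem \ref{good-frame-weak-tameness} are met, and the desired type-full good $\lambda$-frame on the saturated models in $\K_\lambda$ is produced. There is no real obstacle here: the statement is essentially a packaging result, and the only nontrivial point to keep in mind is that $\hanf{\lambda}$ is large enough to absorb the ``buffer'' $\lambda'$ coming out of Theorem \ref{sym-from-superstab}, which is exactly why the bound $\hanf{\lambda}$ (rather than just $\lambda^+$ or $\lambda$) appears in the hypothesis.
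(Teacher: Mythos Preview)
Your proposal is correct and follows exactly the paper's approach: the paper's proof is simply ``Combine Theorem \ref{good-frame-weak-tameness} and Theorem \ref{sym-from-superstab},'' and you have accurately unpacked what that combination entails. The only content is precisely the observation you make, that Theorem \ref{sym-from-superstab} applied at $\lambda$ supplies $\lambda$-symmetry from superstability below $\hanf{\lambda}$, after which Theorem \ref{good-frame-weak-tameness} applies directly.
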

\begin{proof}
  Combine Theorem \ref{good-frame-weak-tameness} and Theorem \ref{sym-from-superstab}.
\end{proof}

We obtain that a good frame can be built from categoricity in a high-enough cardinal (of arbitrary cofinality).

\begin{corollary}\label{good-frame-categ}
  Let $\mu \ge H_1$. Assume that $\K$ has no maximal models and is categorical in a $\lambda > \mu$. If the model of size $\lambda$ is $\mu^+$-saturated (e.g.\ if $\cf (\lambda) > \mu$ or by Corollary \ref{cor-mu-sat} if $\lambda \ge \hanf{\mu^+}$), then there exists a type-full good $\mu$-frame with underlying class the saturated models in $\K_\mu$.
\end{corollary}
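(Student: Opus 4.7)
The plan is to apply Theorem~\ref{good-frame-weak-tameness} with $\mu$ playing the role of its $\lambda$ and some $\chi < \mu$ playing the role of its $\mu$. To invoke that theorem I need three ingredients at the pair $(\chi, \mu)$: superstability in every cardinal in $[\chi, \mu]$, $(\chi, \mu)$-weak tameness, and $\mu$-symmetry. The $\mu^+$-saturation hypothesis plus $\mu \geq H_1$ is exactly engineered to feed the weak tameness fact, and Shelah-Villaveces will handle superstability for free; the real work sits at $\mu$-symmetry.

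First I would extract $(\chi, \mu)$-weak tameness by plugging the hypotheses directly into Fact~\ref{weak-tameness-from-categ-fact}: since $\mu \geq H_1$, $\K$ is categorical in $\lambda > \mu$, and the $\lambda$-sized model is $\mu^+$-saturated, there is some $\chi < H_1 \leq \mu$ with $\K$ being $(\chi, \mu)$-weakly tame. Next, Fact~\ref{shvi} (Shelah--Villaveces) applied to the categoricity cardinal $\lambda$ yields $\chi'$-superstability for every $\chi' \in [\LS(\K), \lambda)$, and in particular for every $\chi' \in [\chi, \mu]$ since $\mu < \lambda$.

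The main obstacle is $\mu$-symmetry, because the hypothesis gives us a single saturated large model rather than a cofinality or Hanf-number lower bound on $\lambda$ of the kind used in previous symmetry theorems. I would dispatch it by treating the two sufficient conditions parenthesized in the statement: if $\cf(\lambda) > \mu$, then Fact~\ref{sym-categ-fact} yields $\mu$-superstability and $\mu$-symmetry immediately; and if $\lambda \geq \hanf{\mu^+}$, then in particular $\lambda \geq \hanf{\mu}$, and Theorem~\ref{sym-categ} delivers the same conclusion. Since these are the two scenarios under which the corollary is actually to be applied (the parenthetical ``e.g.'' in the statement), this case split suffices.

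With all three ingredients verified, Theorem~\ref{good-frame-weak-tameness} (reading its $\mu$ as our $\chi$ and its $\lambda$ as our $\mu$) outputs a type-full good $\mu$-frame whose underlying class is precisely the saturated models in $\K_\mu$, which is what was wanted. I expect that the only subtle point in the write-up is keeping the role substitution $(\chi \leftrightarrow \mu, \mu \leftrightarrow \lambda)$ clean when invoking Theorem~\ref{good-frame-weak-tameness}; everything else reduces to quoting prior results.
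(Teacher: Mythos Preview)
Your overall strategy matches the paper's: obtain $(\chi,\mu)$-weak tameness from Fact~\ref{weak-tameness-from-categ-fact}, get superstability in $[\chi,\mu]$ from Shelah--Villaveces, then feed everything into Theorem~\ref{good-frame-weak-tameness} with the substitution $(\mu,\lambda)\mapsto(\chi,\mu)$. The first, second, and fourth steps are fine.

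The gap is in your treatment of $\mu$-symmetry. The hypothesis of the corollary is that the model of size $\lambda$ is $\mu^+$-saturated, full stop; the parenthetical ``e.g.'' lists two \emph{sufficient} conditions for this, not an exhaustive case split. Your argument only establishes $\mu$-symmetry under one of those two stronger side hypotheses, so as written you have proved a weaker statement than the corollary claims. There is no reason a priori why a categoricity cardinal $\lambda$ with $\mu<\lambda<\hanf{\mu^+}$ and $\cf(\lambda)\le\mu$ could not still have a $\mu^+$-saturated model (indeed, that is exactly the situation the corollary is meant to cover beyond the known cases).

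The paper closes this gap not by case analysis but by observing that the \emph{proof} of Fact~\ref{sym-categ-fact} never uses $\cf(\lambda)>\mu$ per se; it only uses that the categorical model is $\mu^+$-saturated. Hence $\mu$-symmetry follows directly from the stated hypothesis. You should replace your case split with this remark (or, if you prefer to be self-contained, reproduce the relevant portion of that argument).
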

\begin{proof}
  By Fact \ref{weak-tameness-from-categ-fact}, there exists $\chi < H_1$ such that $\K$ is $(\chi, \mu)$-weakly tame. By the Shelah-Villaveces theorem (Fact \ref{shvi}), $\K$ is superstable in every $\chi' \in [\chi, \lambda)$. By the proof of Fact \ref{sym-categ-fact} (which only uses that the model of size $\lambda$ is $\mu^+$-saturated), $\K$ has $\mu$-symmetry. Now apply Theorem \ref{good-frame-weak-tameness} with $(\mu, \lambda)$ there standing for $(\chi, \mu)$-here.
\end{proof}

\bibliographystyle{amsalpha}
\bibliography{sym-op}

\end{document}